\newtheorem{theorem}{Theorem}[section]
\newtheorem{conjecture}[theorem]{Conjecture}
\newtheorem{corollary}[theorem]{Corollary}
\newtheorem{example}[theorem]{Example}
\newtheorem{lemma}[theorem]{Lemma}
\newtheorem{proposition}[theorem]{Proposition}
\newtheorem{question}[theorem]{Question}
\newtheorem{remark}[theorem]{Remark}
\numberwithin{equation}{section}
\newcommand{\N}{\mathbb N}
\newcommand{\R}{\mathbb R}
\newcommand{\pref}{\mathrm{pref}}
\newcommand{\lan}{\mathcal{L}}
\newcommand{\eps}{\epsilon}
\newcommand{\ds}{\displaystyle}
\newcommand{\rt}{\mathrm{rt}} 
\newcommand{\wmax}{\mathtt{wmax}}
\newcommand{\wmin}{\mathtt{wmin}}
\newcommand{\nmin}{\mathrm{nmin}}
\newcommand{\val}{\mathrm{val}}
\newcommand{\rep}{\mathtt{rep}}
\newcommand{\card}{\mathrm{Card}}
\newcommand{\rad}{<_{\mathrm{radix}}}
\newcommand{\tree}{\mathcal{T}} %nouveau mars 26
\date{April 05, 2026}
\title{A Normality Conjecture on Rational Base Number Systems}
\author[,1,2]{M\'elodie Andrieu \thanks{melodie.andrieu@univ-littoral.fr}}
\author[,1]{Shalom Eliahou \thanks{eliahou@univ-littoral.fr}}
\author[,1,2]{L\'eo Vivion \thanks{lvivion.math@gmail.com}}
\affil[1]{Laboratoire de Math\'ematiques Pures et Appliqu\'ees Joseph Liouville, Universit\'e du Littoral C\^ote d'Opale, UR 2597, F-62100 Calais, France and CNRS, FR 2037, France.}
\affil[2]{Centro de Modelamiento Matem\'atico, Universidad de Chile and IRL-CNRS 2807, Beauchef 851, Santiago, Chile.}
\begin{document}

\maketitle

\begin{abstract}
	The rational base number system, introduced by Akiyama, Frougny, and Sakarovitch in 2008, is a generalization of the classical integer base number system. Within this framework  two interesting families of infinite words emerge, called minimal and maximal words. We conjecture that every minimal and maximal word is normal over an appropriate subalphabet. To support this conjecture, we present extensive numerical experiments that examine the richness threshold and the deviation from normality of these words. We also discuss the implications that the validity of our conjecture would have for several long-standing open problems, including the existence of $Z$-numbers (Mahler, 1968) and $Z_{p/q}$-numbers (Flatto, 1992), the existence of triple expansions in rational base $p/q$ (Akiyama, 2008), and the Collatz-inspired `4/3 problem' (Dubickas and Mossinghoff, 2009).

\vspace*{.5cm}
\noindent{\bf Keywords:} Normality $\bm{\cdot}$ Rational base numeration system $\bm{\cdot}$ Mahler's $Z$-numbers   $\bm{\cdot}$ Richness threshold  $\bm{\cdot}$ Discrepancy 
\end{abstract}

%%%%%%%%%%%%%%%%%%%%%%%%%%%%%%%%%%%%%%%%%%
%
%     Fin des infos generales ICI 
%
%%%%%%%%%%%%%%%%%%%%%%%%%%%%%%%%%%%%%%%%%%

%%%%%%%%%%%%%%%%%%%%%%%%%%%%%
\section{Introduction}

The rational base number system was first studied by Akiyama, Frougny, and Sakarovitch in 2008.
Given $p > q$ coprime positive integers, the \emph{expansion} of a nonnegative integer $n$ in rational base $p/q$, which we denote by $\rep_{p/q}(n)$, is the unique finite word \[
    a_k a_{k-1}\cdots a_0,
\]
in which the letters $a_i$ belong to the alphabet $\{0,1,\dots,p-1\}$, and such that
\[
    \begin{cases}a_k \neq 0, \\ n=\frac{1}{q}\sum_{i=0}^k a_i \Big(\frac{p}{q}\Big)^i.
    \end{cases}
\]
When the denominator $q$ is $1$, we recover the classical integer bases. However, when $q$ is not equal to $1$, this numeration system exhibits a surprisingly complex behavior. As an illustration, in rational base $7/3$ the expansions of the integers from $0$ to $18$ are:
\begin{equation} \label{eq:enumerate73}
\eps, \mathtt{3}, \mathtt{6}, \mathtt{32}, \mathtt{35}, \mathtt{61}, \mathtt{64}, \mathtt{320}, \mathtt{323}, \mathtt{326}, \mathtt{352}, \mathtt{355}, \mathtt{611}, \mathtt{614}, \mathtt{640}, \mathtt{643}, \mathtt{646}, \mathtt{3202}, \mathtt{3205}, \end{equation}
where $\eps$ denotes the empty word.

 %SUITE JUSQU'A 29 INCLU
 %\mathtt{3231}, \mathtt{3234}, \mathtt{3260}, \mathtt{3263}, \mathtt{3266},  \mathtt{3522}, \mathtt{3525}, \mathtt{3551},  \mathtt{3554}, \mathtt{6110}, \mathtt{6113}. 

In this paper we are interested in two classes of infinite words that naturally emerge within rational base number systems: the classes of minimal and maximal words. They can be defined as follows. Let $u$ be the expansion of an integer in rational base $p/q$. The \emph{minimal} (resp. \emph{maximal}) word with \emph{seed} $u$, denoted by $\wmin_{p/q}(u)$ (resp. $\wmax_{p/q}(u)$), is the unique infinite word that satisfies the following property: for every $l \in \N$, its prefix of length $l$ is the lexicographically minimal (resp. maximal) element $v \in \{0,\ldots,p-1\}^l$ such that the concatenation $uv$ is still the expansion of an integer in rational base $p/q$.

\begin{example}\label{ex:infinite_wmin}
Let $u$ and $v$ be two finite words. We recall that $u<v$ for the \emph{radix order} if the word $u$ is strictly shorter than $v$, or, when they are of the same length, if $u$ is lexicographically smaller than $v$. For instance, $ba<abb<baa$. If we temporarily admit that, with respect to the radix order, the expansion of an integer $n$ increases as $n$ increases, then we deduce from the list \eqref{eq:enumerate73} that $\wmin_{7/3}(\mathtt{3})$ starts with $\mathtt{202}$, and that $\wmax_{7/3}(\mathtt{3})$ starts with $\mathtt{55}$. Further computations would show that
	\[
	\wmin_{7/3}(\mathtt{3})=
	\mathtt{202122220200012011010222102122101011102220120011100201010...}%
	\]
	\[
	\wmax_{7/3}(3)=
	\mathtt{554646446556454454665466654564564564565645445466446666455...}%
	\]

As the reader may observe:\\
- these two words are written over the subalphabets $\{0,1,2\}$ and $\{4,5,6\}$, respectively; \\
- the distribution of their letters seems erratic.\\
(Other examples of infinite minimal and maximal words can be found in the Online Encyclopedia of Integer Sequences; see, for example, the sequence $A304274$, and the referencing work undertaken in \cite{Stu18}).
\end{example}

It is easy to see that when $q=1$, all minimal words coincide and are equal to the infinite word written with the sole letter $0$:
$$ w = 0000000000000000000000000...,$$
while all maximal words are equal to the infinite word
written with the sole letter $p-1$.
By contrast, when $q\neq 1$, all minimal words on the one hand, and all maximal words on the other hand are pairwise distinct, and none of them is eventually periodic \cite[Proposition~26]{AFS08}. Further insights can be gained through the notion of complexity. By definition, the \emph{complexity} of an infinite word $w$ is the function $l \mapsto \mathsf{p}_w(l)$ that counts, for each nonnegative integer $l$, the number of distinct subwords of length $l$ that one reads in $w$. A celebrated theorem by Morse and Hedlund \cite[Theorems~7.3 and 7.4]{MH38} asserts that an infinite word $w$ is not eventually periodic if and only if its complexity satisfies:
\begin{equation}\label{eq:MH38}
    \mathsf{p}_w(l)\geq l+1
\end{equation}
for all lengths $l$.
From this perspective, the complexity captures how far from being periodic---or 
how \emph{chaotic}---an infinite word is.
In this direction, Dubickas \cite[Theorem~3]{Dub09} established that the complexity of every minimal word in rational base $p/q$ satisfies 
\begin{equation}\label{eq:dubickas_complexity}\liminf_{l\rightarrow \infty} \frac{\mathsf{p}_w(l)}{l} \geq \frac{\log q}{\log(p/q)}.
\end{equation}
The latter expression gives another linear lower bound for the complexity of minimal words, which slightly improves \eqref{eq:MH38} when $p<q^2$:
\begin{equation}\label{eq:Dubickas2}\liminf_{l\rightarrow \infty} \frac{\mathsf{p}_w(l)}{l}>1. \end{equation}

 \medskip
 
The main focus of this article is the belief that a much stronger statement holds: we expect that every minimal and maximal word in a rational base has maximal complexity, and is even normal over an appropriate subalphabet. 

\begin{conjecture}\label{conj:normality}
	For all rational bases $p/q$ with $p>q\geq 1$ coprime, and for all integer expansions $u\in\lan_{p/q}$, the infinite word $\wmax_{p/q}(u)$ is normal over the subalphabet $\{p-q,\dots,p-1\}$.  For all integer expansions $u\in\lan_{p/q}$ except for the empty word $\eps$, the infinite word $\wmin_{p/q}(u)$ is normal over the subalphabet $\{0,\dots,q-1\}$. 
\end{conjecture}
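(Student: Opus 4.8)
The plan is to convert the combinatorial definition of the extremal words into a statement about a single orbit of an explicit dynamical system, and then to recognise normality as an equidistribution property of that orbit. First I would make the production of $\wmin_{p/q}(u)$ fully explicit. If a seed has value $n=\val(u)$, then appending a single digit $d$ yields the integer $(pn+d)/q$, so admissibility forces $d\equiv -pn \pmod q$; within $\{0,\dots,p-1\}$ this is a single residue class of size about $p/q$. Because every admissible prefix extends to every length, the lexicographically minimal extension is obtained greedily: the smallest admissible digit is always $d_k=(-pn_k)\bmod q\in\{0,\dots,q-1\}$, while the state evolves by the deterministic recurrence $n_{k+1}=\lceil pn_k/q\rceil$. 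This already accounts for the subalphabet $\{0,\dots,q-1\}$ appearing in the conjecture, and since $\gcd(p,q)=1$ the assignment $n_k\bmod q\mapsto d_k$ is a bijection of $\{0,\dots,q-1\}$; the maximal word is the mirror case, with $d_k\in\{p-q,\dots,p-1\}$ and $n_{k+1}=\lfloor pn_k/q\rfloor$.

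Next I would introduce the real number $\xi_u:=\lim_{k\to\infty}n_k(q/p)^k$, whose existence follows from the uniform bound $d_k<q$. A telescoping identity then shows that $n_k$ is the truncation of $\xi_u(p/q)^k$ up to a bounded error, and that $d_k$ is a function of $n_k\bmod q$, hence of the position of $\xi_u(p/q)^k$ in $\R/q\mathbb{Z}$. In other words the minimal word is precisely the symbolic coding of the orbit of $\xi_u$ under multiplication by $p/q$, which is exactly the dynamics underlying Mahler's $Z$-numbers and Flatto's $Z_{p/q}$-numbers. With this dictionary in hand, normality of $\wmin_{p/q}(u)$ over the $q$-letter subalphabet becomes a genericity statement for the single explicit orbit $(\xi_u(p/q)^k)_k$: the correct single-letter frequencies $1/q$ amount to equidistribution of this orbit in $\R/q\mathbb{Z}$, while the full requirement that every block of length $m$ occur with frequency $q^{-m}$ amounts to the stronger assertion that the orbit be generic for the measure of maximal entropy $\log q$ of the subshift generated by the minimal words --- conjecturally the uniform Bernoulli measure, once one knows the (weaker, but still open) statement that this subshift is the full $q$-shift, i.e.\ that the complexity is maximal.

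The genericity of a single, explicitly given orbit of the map $x\mapsto(p/q)x\bmod 1$ is the crux, and it lies beyond the reach of current methods. Classical metric equidistribution theorems guarantee that \emph{almost every} real $\xi$ produces a normal word, which is why we expect the conjecture to hold; but the numbers $\xi_u$ arising from integer seeds form only a countable, highly structured set --- each is defined by the very orbit we wish to control --- so no measure-theoretic argument reaches them. Already the weakest instance, equidistribution modulo $q$ of $(\xi_u(p/q)^k)_k$ for $p/q=3/2$, lies in the same circle of ideas as the distribution of $(\{(3/2)^n\})_n$ and the existence of Mahler's $Z$-numbers, all long open.

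I therefore do not expect a complete proof along these lines, and would instead aim at the quantitative shadows that the experiments of this paper probe: nontrivial discrepancy bounds for the orbit $(\xi_u(p/q)^k)_k$, and a positive lower density for every admissible block. The latter alone would already force the maximal complexity $P(w,l)\asymp q^l$, securing the topological half of the picture of Conjecture~\ref{conj:normality} and sharpening Dubickas' bound \eqref{eq:dubickas_complexity}, while leaving the full measure-theoretic normality as the genuinely hard residue.
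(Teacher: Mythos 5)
The statement you were asked to prove is Conjecture~\ref{conj:normality} itself: the paper does not prove it and presents it as open, so the honest assessment in your final paragraphs --- that no complete proof is within reach along these lines --- is exactly right, and your reduction is essentially the one the paper carries out. Your greedy description of the digits ($d_k\equiv -pn_k\bmod q$ with smallest representative in $\{0,\dots,q-1\}$, state evolving by $n_{k+1}=\lceil pn_k/q\rceil$) is Proposition~\ref{prop:algo_wmin} combined with Lemma~\ref{lemma:link_wmin_operatorT}, and your recasting of normality as an equidistribution property of the single orbit $(T_{p/q}^l(n))_l$ is the content of Theorem~\ref{th:conj_equivalence}. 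One place where the paper's route is sharper than your sketch: rather than passing to the real orbit $(\xi_u(p/q)^k)_k$ and asking for genericity with respect to a measure of maximal entropy \emph{conditional} on the subshift being the full $q$-shift, the paper uses the exact combinatorial fact (Lemma~\ref{lemma:congruence_mod_ql}, already known to Mahler) that $n\bmod q^l\mapsto \wmin_{p/q}(\rep_{p/q}(n),l)$ is a bijection onto all of $\{0,\dots,q-1\}^l$; this makes ``every block of length $l$ occurs with frequency $q^{-l}$'' literally identical to ``the integer orbit is equidistributed modulo $q^l$'', with no separate richness hypothesis entering the equivalence.

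Two technical slips in your dictionary deserve fixing. First, your mirror recurrence for maximal words is wrong: with $d_k\in\{p-q,\dots,p-1\}$ one gets $n_{k+1}=(pn_k+d_k)/q=\lceil p(n_k+1)/q\rceil-1$, not $\lfloor pn_k/q\rfloor$ (test $p/q=7/3$ with $n_0=0$: the first maximal digit is $6$, so $n_1=2$, while your formula gives $0$). The paper avoids a second recurrence entirely via Lemma~\ref{lemma:link_wmin_wmax}, $\wmax_{p/q}(u)=\sigma(\wmin_{p/q}(\suc_{p/q}(u)))$, which is also what makes the two halves of the conjecture equivalent (Corollary~\ref{cor:conj1_redundant}). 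Second, the claim that $d_k$ is a function of the position of $\xi_u(p/q)^k$ in $\R/q\mathbb{Z}$ is only approximately right: one has $n_k=\xi_u(p/q)^k-\theta_k$ with $\theta_k\in[0,(q-1)/(p-q)]$, and when $p<2q-1$ this offset exceeds $1$ and depends on the entire future of the orbit, so $n_k\bmod q$ is not a pointwise coding of $\xi_u(p/q)^k$ modulo $q\mathbb{Z}$; the clean identification with the Mahler--Flatto dynamics is exactly the regime $p<q^2$ exploited in the paper's Theorem~\ref{th:impliesMahler}, and it degrades outside it. Neither slip affects your main conclusions --- reduction to equidistribution of one explicit structured orbit, failure of metric arguments on a countable exceptional set, and the suggestion to target discrepancy bounds and block densities instead --- which is precisely the position the paper itself takes in Sections~\ref{sect:links_between_conjectures} and~\ref{sect:numerical_evidence}.
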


\begin{remark}
	(i) The minimal word with seed the empty word must be excluded from Conjecture~\ref{conj:normality} because it starts with the letter $q$. This letter arises from the fact that no expansion is allowed to begin with the letter $0$. If we remove the first letter of $\wmin_{p/q}(\eps)$, we obtain the word $\wmin_{p/q}(q)$, which falls within the scope of the conjecture.
\\
(ii) As we shall see, minimal and maximal words are pairwise related, and the normality of all minimal words but $\wmin_{p/q}(\eps)$ is equivalent to the normality of all maximal words. 
\end{remark}

We recall that an infinite word $w$ is \emph{normal} over a $d$-ary alphabet if for all $l\geq1$, each of the $d^l$ words of length $l$ occurs in $w$ with the same limit frequency $1/d^l$.
The notion of normality, introduced by \'Emile Borel in 1909 to study the distribution of digits in real numbers \cite{Bor09}, has a rich history filled with challenging and unresolved questions (see, for example, the survey \cite{Que06}, \cite[Chapter~4]{BB08}, \cite[Chapters~4--6]{Bug12}, or the lecture notes \cite{BC18}). In rational base $p/q$, normality has been previously studied in \cite{MST13} on the example of the Champernowne word.

\medskip

It is noteworthy that Conjecture~\ref{conj:normality} is trivially true when $q=1$, that is, for integer bases. Indeed, as we saw, all minimal words are equal to the infinite word written with the sole letter $0$, and all maximal words are equal to the infinite word
 written with the sole letter $q-1$, which are normal over the singleton alphabets $\{0\}$ and $\{q-1\}$, respectively. 
 The aim of the paper is to convince the reader that our conjecture seems true and of considerable difficulty when $q\neq 1$.
 
 In the first direction, we present and discuss the results of extensive numerical experiments that support our conjecture. These experiments consist in computing the  \emph{richness threshold} and the \emph{deviation from uniform distribution} of minimal words, and compare them with those of celebrated infinite words known or conjectured to be normal, and with those of random words.
 We investigated in total $139$ minimal words up to length $10^6$, and 40,000 minimal words up to length $10^5$, in various rational bases with $2\leq q < p\leq 26$. 
 
In the second direction, we show that the validity of  Conjecture~\ref{conj:normality} would imply the truth of four long-standing conjectures:
\begin{itemize}
    \item[-] a celebrated conjecture by Mahler from 1968, which asserts the non-existence of `$Z$-numbers' \cite{Mah68};
    
    \item[-] one of its generalizations: the non-existence of `$Z_{p/q}$-numbers', when $p<q^2$ (see Conjecture \ref{conj:mahler_generalized} below);
    
    \item[-] a conjecture by Akiyama from 2008, according to which no real number admits a triple expansion in rational base $p/q$ \cite{Aki08};
    
    \item[-] a conjecture by Dubickas and Mossinghoff from 2009 concerning the termination of certain iterated maps on integers \cite{DM09}.
\end{itemize}

\begin{conjecture}%PAS VRAIMENT:[\cite{Mah68}, Section~6]
	\label{conj:mahler_generalized} Let $p>q> 1$ be coprime integers such that  $p< q^2$. There exists no positive real number $x$ (called \emph{$Z_{p/q}$-number}) such that the sequence of fractional parts
	\[(\{x(p/q)^n\})_{n\in \N}\]
	is contained in the subinterval $[0,1/q).$
\end{conjecture}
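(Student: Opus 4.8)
Since the statement is itself a conjecture, the plan is to \emph{derive} it from the normality Conjecture~\ref{conj:normality}: I would show that if every minimal word is normal over $\{0,\dots,q-1\}$, then no $Z_{p/q}$-number can exist. Fix coprime $p>q\geq 2$ with $p<q^2$ (the case $p=q^2$ is vacuous by coprimality), and argue by contradiction from a hypothetical $Z_{p/q}$-number $x$. Replacing $x$ by $x(p/q)^m$ for $m$ large --- which is again a $Z_{p/q}$-number, since $\{x(p/q)^{m}(p/q)^n\}=\{x(p/q)^{m+n}\}\in[0,1/q)$ --- I may assume $x\geq 1$, and I set $I_n=\lfloor x(p/q)^n\rfloor$ and $f_n=\{x(p/q)^n\}\in[0,1/q)$.

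First I would extract an integer recurrence. From $q\,x(p/q)^{n+1}=p\,x(p/q)^n$ one gets $qI_{n+1}+qf_{n+1}=pI_n+pf_n$. Because the $Z$-property guarantees $qf_{n+1}\in[0,1)$, comparing fractional parts yields $qf_{n+1}=\{pf_n\}$ and $qI_{n+1}=pI_n+c_n$, where $c_n:=\lfloor pf_n\rfloor$. As $f_n<1/q$ and $p<q^2$, each $c_n$ lies in $\{0,\dots,q-1\}$, and the relation $qI_{n+1}=pI_n+c_n$ forces $c_n\equiv -pI_n\pmod q$, i.e. $c_n=(-pI_n)\bmod q$.

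The crux is to recognise the word $w:=c_0c_1c_2\cdots$ as a minimal word. Since $0\le c_n<q\le p$, the relation $qI_{n+1}=pI_n+c_n$ is exactly the terminal step of the base-$p/q$ expansion algorithm, so the definition of $\rep_{p/q}$ gives $\rep_{p/q}(I_{n+1})=\rep_{p/q}(I_n)\,c_n$ and hence $\rep_{p/q}(I_n)=\rep_{p/q}(I_0)\,c_0\cdots c_{n-1}$. The same computation shows that $c_n=(-pI_n)\bmod q$ is precisely the lexicographically least digit extending $\rep_{p/q}(I_n)$ to a valid expansion, so $w=\wmin_{p/q}(\rep_{p/q}(I_0))$, a minimal word with the nonempty seed $\rep_{p/q}(I_0)$ (nonempty since $I_0\geq 1$). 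I would also record the tail identity $f_n=c_n/p+(q/p)f_{n+1}$, which unfolds to $f_n=\tfrac1p\sum_{j\geq 0}c_{n+j}(q/p)^j$.

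Finally I would play normality against the size of $f_n$. Assuming Conjecture~\ref{conj:normality}, $w$ is normal over $\{0,\dots,q-1\}$, so the block $(q-1)^{L}$ occurs for every $L$. If $c_n=\cdots=c_{n+L-1}=q-1$, the tail identity gives
\[
f_n\ \geq\ \frac1p\,(q-1)\sum_{j=0}^{L-1}(q/p)^j\ =\ \frac{(q-1)\bigl(1-(q/p)^{L}\bigr)}{p-q}\ \xrightarrow[L\to\infty]{}\ \frac{q-1}{p-q}.
\]
Because $p<q^2$ is equivalent to $\tfrac{q-1}{p-q}>\tfrac1q$, a sufficiently long run of the top letter forces $f_n>1/q$, contradicting $f_n\in[0,1/q)$; hence no $Z_{p/q}$-number exists. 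The hard part is that this entire argument is conditional on the open Conjecture~\ref{conj:normality}: unconditionally one cannot even guarantee that the minimal word $w$ ever contains two consecutive copies of $q-1$, so the genuine difficulty is displaced onto normality --- or even the far weaker assertion that minimal words are not eventually trapped away from long runs of their largest letter --- rather than onto the elementary reduction above. As a sanity check, in Mahler's base $3/2$ ($p=3$, $q=2$) the bound reads $f_n\geq 1-(2/3)^{L}$, so already the block $\mathtt{11}$ would push $f_n$ past $1/2$: a $Z$-number would force the minimal word over $\{0,1\}$ to avoid $\mathtt{11}$, which normality forbids.
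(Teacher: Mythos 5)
Your proposal is correct, and your decision to prove the statement \emph{conditionally on} Conjecture~\ref{conj:normality} is the right reading: the paper itself does not prove Conjecture~\ref{conj:mahler_generalized} unconditionally, and its own treatment is exactly the proof of Theorem~\ref{th:impliesMahler} in Section~3.2. Your skeleton coincides with that proof: both set up the carry recurrence $qI_{n+1}=pI_n+c_n$, $qf_{n+1}=pf_n-c_n$ with $c_n\in\{0,\dots,q-1\}$ (your $I_n,f_n,c_n$ are the paper's $g_n,r_n,\alpha_n$; the paper imports the recurrence from \cite[Lemma~3.1]{DM09} where you rederive it, using $p<q^2$ to keep $c_n<q$ in exactly the same way), both identify $c_0c_1c_2\cdots$ as $\wmin_{p/q}(\rep_{p/q}(I_0))$ via Proposition~\ref{prop:algo_wmin}, and both finish by exhibiting a block over $\{0,\dots,q-1\}$ that the orbit can never realize, contradicting normality. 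Where you genuinely diverge is the final estimate. The paper argues one step at a time on the fractional parts: if $\alpha_n=q-1$ then $r_{n+1}<\frac{p-q^2+q}{q^2}$, which forbids the single two-letter factor $(q-1)(q-1)$; its stated intermediate conclusions $r_{n+1}<1/p$ and $\alpha_{n+1}=0$ actually overshoot for general $p<q^2$ (for $p/q=8/3$ the displayed bound gives only $r_{n+1}<2/9>1/8$), but $\alpha_{n+1}\leq q-2$, which is all that is used, does follow --- a delicacy your route sidesteps entirely. You instead unfold the recurrence into the tail identity $f_n=\frac{1}{p}\sum_{j\geq0}c_{n+j}(q/p)^j$ and let a run $(q-1)^L$ of unspecified length push $f_n$ above $1/q$. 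The paper's local estimate buys the remark made right after its proof: only the occurrence of one length-$2$ block is needed, so hypotheses far weaker than normality suffice. Your version looks weaker on this point, but in fact is not: taking $L=2$ in your bound gives $f_n\geq\frac{(q-1)(p+q)}{p^2}$, and $q(q-1)(p+q)>p^2$ holds for all integers $q<p<q^2$ (the map $p\mapsto q(q-1)(p+q)-p^2$ is concave in $p$ and positive at both endpoints $p=q+1$ and $p=q^2-1$), so your series bound recovers the same two-letter obstruction --- it would be worth making this explicit. What your route additionally buys: it is self-contained rather than citing \cite{DM09}; it makes the role of the hypothesis $p<q^2$ transparent through the equivalence $\frac{q-1}{p-q}>\frac{1}{q}\iff p<q^2$; and your normalization $x\geq1$ guarantees a nonempty seed $\rep_{p/q}(I_0)$, an edge case ($\lfloor x\rfloor=0$, seed $\eps$) that the paper's proof passes over silently and that matters because Conjecture~\ref{conj:normality} asserts normality of $\wmin_{p/q}(u)$ only for $u\neq\eps$.
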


\begin{theorem}\label{th:impliesMahler} The veracity of Conjecture~\ref{conj:normality} implies that of Conjecture~\ref{conj:mahler_generalized}.
\end{theorem}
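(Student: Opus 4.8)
The plan is to set up a dictionary between $Z_{p/q}$-numbers and minimal words, and then use normality to manufacture a forbidden digit pattern. First I would reduce to the case $N:=\lfloor x\rfloor\ge1$: if $x\in(0,1)$ is a $Z_{p/q}$-number then $x(p/q)^{n_0}\ge1$ for some $n_0$, and $x(p/q)^{n_0}$ is again a $Z_{p/q}$-number, so I may assume the seed $u=\rep_{p/q}(N)$ is nonempty and that Conjecture~\ref{conj:normality} applies to $\wmin_{p/q}(u)$. (For $q=1$ the statement of Conjecture~\ref{conj:mahler_generalized} is vacuous, so I assume $q\ge2$, whence $p<q^2$.) To a $Z_{p/q}$-number $x$ I attach its orbit $z_n=x(p/q)^n$, the integer parts $m_n=\lfloor z_n\rfloor$ (so $m_0=N$) and fractional parts $f_n=\{z_n\}\in[0,1/q)$, together with the integers $e_n=qm_{n+1}-pm_n$. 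From $qz_{n+1}=pz_n$ one gets $e_n=pf_n-qf_{n+1}$, whence $e_n\in(-1,p/q)$, and since $p/q<q$ this forces $e_n\in\{0,\dots,q-1\}$. Thus $\rep_{p/q}(m_{n+1})=\rep_{p/q}(m_n)\cdot e_n$ and $w:=e_0e_1e_2\cdots$ is an infinite extension of $u$ over the subalphabet $\{0,\dots,q-1\}$.

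The crux is to show that $w$ is exactly the \emph{minimal} word $\wmin_{p/q}(u)$. I would prove that the orbit obeys the recursion $m_{n+1}=\lceil (p/q)m_n\rceil$, which characterises $\wmin_{p/q}(u)$: by the radix/numerical-order correspondence the smallest extension is obtained by repeatedly taking the smallest child, i.e. $M^-_{n+1}=\lceil (p/q)M^-_n\rceil$ with appended digit $(-pM^-_n)\bmod q$. Writing $pm_n=qs_n+r_n$ with $r_n=pm_n\bmod q$, one has $m_{n+1}=s_n+\lfloor (r_n+pf_n)/q\rfloor\in\{s_n,s_n+1\}$, since $r_n+pf_n<(q-1)+p/q<2q$. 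If $r_n=0$ then $pf_n/q<p/q^2\le1$ forces $m_{n+1}=s_n=\lceil (p/q)m_n\rceil$; if $r_n\ge1$ and the floor were $0$, then $e_n=-r_n<0$ and $f_{n+1}=(pf_n+r_n)/q\ge r_n/q\ge1/q$, contradicting the $Z_{p/q}$ hypothesis, so again $m_{n+1}=\lceil (p/q)m_n\rceil$. This case analysis is where I expect the real work to lie; it is the step that genuinely consumes both hypotheses $f_n<1/q$ and $p\le q^2$, and it identifies the orbit digits with those of $\wmin_{p/q}(u)$.

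Finally I would turn the $Z_{p/q}$-condition into a digit inequality and close with normality. Iterating $qf_{n+1}=pf_n-e_n$ and using $0\le f_N<1$ with $(q/p)^N\to0$ yields the closed form $f_m=\tfrac1p\sum_{n\ge0}e_{m+n}(q/p)^n$; hence $f_m<1/q$ for all $m$ is equivalent to $\sum_{n\ge0}e_{m+n}(q/p)^n<p/q$ for all $m$, where now $e_0e_1\cdots=\wmin_{p/q}(u)$. By Conjecture~\ref{conj:normality}, $\wmin_{p/q}(u)$ is normal over $\{0,\dots,q-1\}$, so for every $l$ it contains the block $(q-1)^l$; at the starting index $m$ of such a block, $\sum_{n\ge0}e_{m+n}(q/p)^n\ge (q-1)\tfrac{1-(q/p)^l}{1-q/p}$, which increases to $(q-1)\tfrac{p}{p-q}$ as $l\to\infty$. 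Since $p<q^2$ gives $(q-1)\tfrac{p}{p-q}>p/q$, taking $l$ large enough produces a window with $\sum_{n\ge0}e_{m+n}(q/p)^n>p/q$, i.e. $f_m\ge1/q$ — contradicting that $x$ is a $Z_{p/q}$-number. Therefore no $Z_{p/q}$-number exists, which is Conjecture~\ref{conj:mahler_generalized}.
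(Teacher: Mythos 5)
Your proof is correct, and while it follows the same overall architecture as the paper (identify the orbit of a putative $Z_{p/q}$-number with a minimal word, then use normality to produce a forbidden digit pattern), it differs in two substantive ways. First, where the paper imports the dictionary from Dubickas--Mossinghoff (their Lemma~3.1) to get $g_{n+1}=(pg_n+\alpha_n)/q$ with $\alpha_n=(-pg_n)\bmod q$ and then invokes Proposition~\ref{prop:algo_wmin}, you re-derive it from scratch; in fact your step showing $m_{n+1}=\lceil(p/q)m_n\rceil$ by the $r_n$ case analysis is redundant, since once you know $e_n\in\{0,\dots,q-1\}$ and $m_{n+1}=(pm_n+e_n)/q\in\mathbb{Z}$, the digit $e_n$ is already the unique one characterized by Proposition~\ref{prop:algo_wmin}, which pins down $e_0e_1\cdots=\wmin_{p/q}(u)$ directly. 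Second, the closing arguments genuinely differ: the paper shows the single length-$2$ factor $(q-1)(q-1)$ can never occur (a local, one-step estimate: $\alpha_n=q-1$ forces $r_{n+1}$ so small that $\alpha_{n+1}<q-1$), whereas you derive the closed form $f_m=\frac1p\sum_{n\geq0}e_{m+n}(q/p)^n$ and show that a block $(q-1)^l$ with $l$ large (depending on how close $p$ is to $q^2$, via $(q-1)\tfrac{p}{p-q}>\tfrac{p}{q}\iff p<q^2$) forces $f_m\geq 1/q$. The paper's finish is sharper---it shows that even the statement ``every minimal word contains $(q-1)(q-1)$'' suffices---while yours buys a clean Mahler-style equivalence between the $Z_{p/q}$-condition and a digit inequality on $\wmin_{p/q}(u)$, which speaks to the paper's Question~\ref{question:further_mahler}. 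A further point in your favor: your reduction to $\lfloor x\rfloor\geq1$ handles the case $x\in(0,1)$, where the naive seed would be $\eps$---precisely the seed excluded from Conjecture~\ref{conj:normality}---a case the paper's proof passes over silently.
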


As the proofs will show, our normality Conjecture \ref{conj:normality} is significantly stronger than the four aforementioned conjectures. For example, we will see that proving all minimal words contain the letter $0$ at least once---a statement considerably weaker than normality---is already sufficient to establish Akiyama's triple expansion conjecture.

\medskip
Finally,  we show that Conjecture~\ref{conj:normality} can be equivalently formulated in terms of equidistribution in residue classes. We recall that an integer sequence $(u_n)$ is \emph{equidistributed in the residue classes modulo} $m$ if  the frequencies of the events $u_n \equiv r \mod m$, for $r \in \{0,\dots,m-1\}$, are all equal to $1/m$.

\begin{conjecture}\label{conj:equidistribution} Let $p > q \geq 1$ be coprime integers. For every $n \in \N_{>0}$, and for every nonnegative integer $k$, the integer sequence $(T_{p/q}^l(n))_{l \in \N}$, obtained by iterating the operator 
\[ T_{p/q}(x) := \Big\lceil \frac{p}{q}x\Big\rceil,\]
is equidistributed in the residue classes modulo $q^k$. \end{conjecture}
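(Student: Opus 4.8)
The plan is to attack the equidistribution statement directly, orbit by orbit, through a finite Weyl criterion. Fix $p>q\geq 1$ coprime, a starting integer $n\in\N_{>0}$ and an exponent $k$, and write $n_l=T_{p/q}^l(n)$. The first step is to record the arithmetic of a single iteration: since $qn_{l+1}=q\lceil pn_l/q\rceil$ is the least multiple of $q$ that is at least $pn_l$, one has
\[ qn_{l+1}=pn_l+\delta_l,\qquad \delta_l\in\{0,\dots,q-1\},\quad \delta_l\equiv -pn_l \pmod q, \]
and because $\gcd(p,q)=1$ the carry $\delta_l$ is a function of $n_l\bmod q$ alone. Equidistribution of $(n_l)_l$ in the residue classes modulo $q^k$ is then equivalent, by the finite Weyl criterion, to the decay
\[ S_k(j,L):=\frac1L\sum_{l=0}^{L-1} e^{2\pi i\, j n_l/q^k}\longrightarrow 0\quad(L\to\infty),\qquad j\in\{1,\dots,q^k-1\}, \]
so the whole problem reduces to exhibiting cancellation in these character sums along a single deterministic orbit.

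Next I would propagate these sums through the dynamics. Substituting $n_{l+1}=(pn_l+\delta_l)/q$ yields the exact identity
\[ e^{2\pi i\, j n_{l+1}/q^k}=e^{2\pi i\,(jp) n_l/q^{k+1}}\cdot e^{2\pi i\, j\delta_l/q^{k+1}}, \]
in which the twist depends only on $n_l\bmod q$. Summing over $l$ and absorbing the two boundary terms into an $O(1)$ error rewrites $S_k(j,\cdot)$ as a $\delta$-weighted combination, over the residue $r=n_l\bmod q$, of the higher-precision restricted sums $\sum_{l:\,n_l\equiv r} e^{2\pi i\,(jp)n_l/q^{k+1}}$; since $1\le j<q^{k+1}$ and $\gcd(p,q)=1$, the index $jp$ is again nonzero modulo $q^{k+1}$. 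This is the natural self-similarity of the $p/q$ odometer: on the profinite completion $\mathbb{Z}_q$ the iteration lifts to $F(x)=(px+\delta(x))/q$, which multiplies by the unit $p$ and shifts the base-$q$ expansion down by one digit, and $S_k(j,\cdot)$ measures equidistribution of the $F$-orbit in the finite quotient $\mathbb{Z}/q^k\mathbb{Z}$. When $q\mid j$, writing $j=qj'$ collapses the right-hand side back to level-$k$ sums of index $j'p$, so these \emph{imprimitive} characters can plausibly be cleared by a descent on the $q$-adic valuation of $j$; the irreducible core is the set of \emph{primitive} characters $\gcd(j,q)=1$, for which the recursion sends a primitive level-$k$ sum to primitive level-$(k+1)$ sums, strictly \emph{raising} the $q$-adic precision at each pass.

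\textbf{Main obstacle.} This increase of precision is exactly the wrong direction for an induction: the recursion never bottoms out, and every pass demands control of finer and finer character sums—precisely the equidistribution we set out to prove. The deterministic orbit offers no intrinsic source of cancellation to break this loop, which is the same barrier that keeps Mahler's $Z$-number problem open, and indeed, via the correspondence developed earlier in the paper, the present statement is equivalent to the normality Conjecture~\ref{conj:normality}. A genuine unconditional proof therefore seems to require an input beyond the elementary recursion above—for instance an effective equidistribution theorem for the $\times p$ action on the $q$-adic solenoid, or a transfer-operator argument establishing a spectral gap for $F$ on $\mathbb{Z}_q$—and I would expect the power-saving estimate for primitive character sums to be the crux on which everything hinges.
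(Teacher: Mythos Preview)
The statement you are attempting is Conjecture~\ref{conj:equidistribution}, which the paper explicitly leaves \emph{open}: there is no proof of it anywhere in the paper. What the paper proves is Theorem~\ref{th:conj_equivalence}, namely that this conjecture is equivalent to the normality Conjecture~\ref{conj:normality}; both remain unproved and are supported only by the numerical experiments of Section~\ref{sect:numerical_evidence}.

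Your write-up is not a proof, and to your credit you say so yourself in the final paragraph: the recursion you set up sends a primitive level-$k$ character sum to level-$(k{+}1)$ sums, strictly raising the $q$-adic precision at each pass, so the induction never bottoms out and no cancellation is ever actually established. That diagnosis is correct. In effect you have rediscovered, in the language of Weyl sums on $\mathbb{Z}_q$, precisely the obstruction that makes the conjecture hard, and you have correctly identified the equivalence with Conjecture~\ref{conj:normality} that the paper proves. But since the paper contains no proof of the statement, there is nothing to compare your attempt against; the honest summary is that the target is a genuinely open problem, and your outline --- while a reasonable reconnaissance of the terrain --- does not close it.
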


\begin{theorem}\label{th:conj_equivalence}
Conjectures~\ref{conj:normality} and \ref{conj:equidistribution} are equivalent.
\end{theorem}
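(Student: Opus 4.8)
The plan is to set up an explicit dictionary between the letters of the minimal and maximal words and the residues of the orbits of $T_{p/q}$, and then transport normality across this dictionary. Fix a seed $u=\rep_{p/q}(n)$ with $n\geq 1$ and write $N_l:=T_{p/q}^l(n)$. The first step is to identify the letters of $\wmin_{p/q}(u)$ with the residues $N_l \bmod q$. Appending a single letter $c\in\{0,\dots,p-1\}$ to $u$ yields a valid word $uc\in\lan_{p/q}$ exactly when $\val(uc)=(pn+c)/q\in\N$, i.e. when $c\equiv -pn \pmod q$; and since expansions of a fixed length are ordered by their value (the announced radix fact), the lexicographically least such completion is obtained by taking the least admissible letter $c_0=(-pn)\bmod q\in\{0,\dots,q-1\}$, for which $\val(uc_0)=\lceil pn/q\rceil=T_{p/q}(n)$. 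Because every valid word extends to arbitrary length, this greedy choice propagates, giving $\wmin_{p/q}(u)=c_0\,\wmin_{p/q}(uc_0)$ and, by induction, the $l$-th letter of $\wmin_{p/q}(u)$ equals $(-pN_l)\bmod q$. I would record this as a lemma.

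Next I would promote this letterwise identity to a factorwise one. For $k\geq 1$ I claim there is a bijection $\Phi_k$ from the residues modulo $q^k$ onto the words of length $k$ over $\{0,\dots,q-1\}$ such that the factor of $\wmin_{p/q}(u)$ of length $k$ starting at position $l$ equals $\Phi_k(N_l \bmod q^k)$. The forward direction is an induction on $k$: from $q N_{l+1}=pN_l+c_l$ one sees that $N_l \bmod q^k$ determines $c_l=(-pN_l)\bmod q$ and also $N_{l+1}\bmod q^{k-1}$, hence—by the inductive hypothesis at position $l+1$—the whole length-$k$ factor. The same relation, rewritten as $N_l\equiv p^{-1}(qN_{l+1}-c_l)\pmod{q^k}$ (using $\gcd(p,q)=1$), recovers $N_l\bmod q^k$ from the factor, giving an explicit inverse; as both sets have $q^k$ elements, $\Phi_k$ is a bijection. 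Consequently the frequency of any word $f$ of length $k$ in $\wmin_{p/q}(u)$ equals the frequency of the residue class $\Phi_k^{-1}(f)$ in $(N_l)_l$. Thus $\wmin_{p/q}(u)$ is normal over $\{0,\dots,q-1\}$ if and only if $(T_{p/q}^l(n))_l$ is equidistributed modulo $q^k$ for every $k$, which is exactly the $n$-th instance of Conjecture~\ref{conj:equidistribution}.

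It remains to fold the maximal words into the same statement. Repeating the greedy analysis with the \emph{largest} admissible letter shows that the one-letter maximal extension of $\rep_{p/q}(n)$ has value $S_{p/q}(n):=\lceil \tfrac pq (n+1)\rceil-1$ and last letter in $\{p-q,\dots,p-1\}$. The key observation is that $S_{p/q}(x)+1=T_{p/q}(x+1)$, so the $S_{p/q}$-orbit of $n$ is $(T_{p/q}^l(n+1)-1)_l$; substituting this into $d_l=q\,n^{\max}_{l+1}-p\,n^{\max}_{l}$ gives $d_l=(p-q)+\big((-p\,T_{p/q}^l(n+1))\bmod q\big)$. In other words $\wmax_{p/q}(\rep_{p/q}(n))$ is obtained from $\wmin_{p/q}(\rep_{p/q}(n+1))$ by the letter-translation $a\mapsto a+(p-q)$, a bijection $\{0,\dots,q-1\}\to\{p-q,\dots,p-1\}$ that preserves all factor frequencies. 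Hence the maximal word of seed $n$ is normal over $\{p-q,\dots,p-1\}$ precisely when the minimal word of seed $n+1$ is normal over $\{0,\dots,q-1\}$, i.e. precisely when the $(n+1)$-th instance of Conjecture~\ref{conj:equidistribution} holds. As $n$ ranges over $\N$ (so $n+1$ over $\N_{>0}$) for the maximal words and over $\N_{>0}$ for the minimal words, the totality of these instances is exactly Conjecture~\ref{conj:equidistribution}, and the two conjectures are equivalent.

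The content of the argument lies entirely in these correspondences; the equivalence itself needs no new estimate, only care. The two points I expect to demand the most attention are, first, justifying the greedy description of $\wmin_{p/q}$ and $\wmax_{p/q}$—which rests on the already-announced facts that completions of every length exist and that equal-length expansions are ordered by value, so that lexicographic minimality coincides with value minimality—and, second, the bookkeeping of the degenerate seed $\eps$: the orbit of $0$ under $T_{p/q}$ is the fixed point $0$, so $\wmin_{p/q}(\eps)=000\cdots$ is not normal, which explains its exclusion from Conjecture~\ref{conj:normality}, whereas $S_{p/q}(0)=\lceil p/q\rceil-1=T_{p/q}(1)-1$ sends the empty seed to the orbit of $1$, so the maximal word of $\eps$ is legitimately included. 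Matching these quantifiers against those of Conjecture~\ref{conj:equidistribution} ($n\in\N_{>0}$, all $k$) is the final verification.
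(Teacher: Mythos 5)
Your argument is correct and, at the structural level, it is the paper's proof: your three correspondences are exactly the paper's three lemmas. The greedy description of $\wmin_{p/q}$ and the identification of the minimal numbers with the $T_{p/q}$-orbit are Proposition~\ref{prop:algo_wmin} and Lemma~\ref{lemma:link_wmin_operatorT}; your bijection $\Phi_k$ between residues modulo $q^k$ and length-$k$ factors is Lemma~\ref{lemma:congruence_mod_ql}; and the letter-translation $a\mapsto a+p-q$ relating $\wmax_{p/q}(\rep_{p/q}(n))$ to $\wmin_{p/q}(\rep_{p/q}(n+1))$ is Lemma~\ref{lemma:link_wmin_wmax} (with $\suc$ in place of $n\mapsto n+1$) together with Corollary~\ref{cor:conj1_redundant}. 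Where you differ is in how two of these lemmas are proved, and your versions are more self-contained. For the factor--residue bijection, the paper cites \cite[Lemma~6]{AFS08} and concludes by a cardinality count, whereas you exhibit an explicit inverse via $N_l\equiv p^{-1}(qN_{l+1}-c_l)\pmod{q^k}$, using only $\gcd(p,q)=1$; this buys a constructive proof needing no external input. For the min/max translation, the paper runs a letterwise induction with a geometric-series manipulation of valuations, whereas you conjugate at the level of values through $S_{p/q}(x)+1=T_{p/q}(x+1)$, which is shorter and makes the successor shift transparent. Your bookkeeping of seeds ($n\geq 1$ for minimal words; $n\geq 0$ mapped to $n+1\geq 1$ for maximal words) matches the paper's use of $\suc(\lan_{p/q})=\lan_{p/q}\setminus\{\eps\}$.

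One aside is factually wrong, though nothing in the proof rests on it: $\wmin_{p/q}(\eps)$ is not $000\cdots$. Words of $\lan_{p/q}$ are expansions and hence cannot begin with the letter $0$, so your one-letter criterion ``$uc\in\lan_{p/q}$ iff $\val_{p/q}(uc)\in\N$'' fails precisely at $u=\eps$, $c=0$; in fact $\wmin_{p/q}(\eps,1)$ is the single letter $q$, and $\wmin_{p/q}(\eps)$ is $q$ followed by $\wmin_{p/q}(\rep_{p/q}(1))$. The seed $\eps$ is excluded from the minimal-word half of Conjecture~\ref{conj:normality} because this word is not over the subalphabet $\{0,\dots,q-1\}$, not because it is constant. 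Your main argument is unaffected: you apply the greedy description only to seeds $\rep_{p/q}(n)$ with $n\geq 1$, and your treatment of $\eps$ on the maximal side via $S_{p/q}(0)=T_{p/q}(1)-1$ is correct (there the letter $0$ is never the maximal admissible choice), so the quantifier matching stands.
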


\medskip

Our work can be understood as a three-direction generalization of a recent article by Eliahou and Verger-Gaugry \cite{EVG25}, which explores the possibility that the letters (and not the finite words) are equidistributed in the maximal word with seed the empty word, in the particular base $p/q=3/2$.

\paragraph{Outline} The article is divided into four sections. In Section~\ref{sect:def_and_construction_minimal words}, we recall the main properties of rational base number systems, and explain how to compute minimal and maximal words. In Section~\ref{sect:links_between_conjectures}, we prove Theorems \ref{th:impliesMahler} and \ref{th:conj_equivalence}, that is, we show that Conjectures~\ref{conj:normality} and \ref{conj:equidistribution} are equivalent, and stronger than Conjecture \ref{conj:mahler_generalized}. We also prove that the validity of our conjecture easily implies the truth of the triple-expansion conjecture by Akiyama from 2008, and the truth of the Collatz-like conjecture by Dubickas and Mossinghoff from 2009. In Section~\ref{sect:numerical_evidence}, we present and discuss the numerical experiments that support our conjecture.

%%%%%%%%%%%%%%
%%%%%%%%%%%%%%

%%%%%%%%%%%%%%%%%%%%
%%%%%%%%%%%%%%%%%%%%
\section{Preliminaries: Rational Base Number System, Minimal and Maximal Words}\label{sect:def_and_construction_minimal words}

The results of this section are taken from the seminal paper \cite{AFS08}.

\subsection{The Modified Division Algorithm}

In an integer base $b$, there exist two algorithms to compute the expansion of an integer. The first one, called the \emph{greedy algorithm}, consists of successively removing the largest possible power of $b$ as many times as possible. It outputs the digits of 
$n$ from most to least significant.
By contrast, the second algorithm computes the least significant digits first. It consists of dividing $n$ by $b$ and repeating the process on the successive quotients until the quotient is zero. 
Rational base number systems are based on a generalization of this second algorithm, called the \emph{Modified Division Algorithm} (MD algorithm).

\begin{algorithm}
	\caption{Modified Division Algorithm}
	\KwIn{Integer $n$}
	\KwOut{A finite word $(a_k,a_{k-1},\dots,a_0)$}
	
	$k \leftarrow 0$\;   
	\While{$n > 0$}{
		\# the Euclidean division of $qn$ by $p$\\
		$a_k \leftarrow (q \cdot n) \bmod p$\;
		$n \leftarrow \left\lfloor \dfrac{q \cdot n}{p} \right\rfloor$\;   
		$k \leftarrow k + 1$\;
	}
	\Return $(a_k, a_{k-1}\dots,a_0)$\;	
\end{algorithm}

The output of the MD algorithm is called the \emph{expansion} of $n$ in the rational base $p/q$, and denoted by $\rep_{p/q}(n)$. 
 One can retrieve $n$ from its expansion using the (modified) evaluation function
\begin{equation}\label{eq:def_val}
\val_{p/q}(a_k\cdots a_0):=\frac{1}{q}\sum_{i=0}^k a_i \Big(\frac{p}{q}\Big)^i.
\end{equation}
Furthermore, it can be shown that $\rep_{p/q}(n)$ is the unique finite word $u \in \{0, \dots, p-1\}^*$ that does not start with $0$ and whose evaluation yields $n$.

\medskip

\begin{example}\label{ex:MD_algo} The expansions of $0$ and $1$ in any rational base $p/q$ are, respectively, the empty word $\eps$ and the single-digit word $q$. The expansion of $10$ in the rational base $7/3$ is calculated with the following steps:
\[
 30 = 4 \times 7 + 2, \quad
12 = 1 \times 7 + 5, \quad
3 = 0 \times 7 + 3.
\]
This yields $\rep_{7/3}(10) = \mathtt{352}$. When $q=1$, the expansion in the rational base $p/q$ coincides with the expansion in the integer base $p$.
\end{example}

\subsection{The language of integers $\lan_{p/q}$}

In this paper, we are interested in the \emph{language of integers} defined by
\[\lan_{p/q} = \{ \rep_{p/q}(n) : n \in \mathbb{N} \}.\]
This language $\lan_{p/q}$ is known to exhibit highly complex and non-regular behavior when $q \neq 1$ (for example, it is not context-free). However, it satisfies two fundamental properties: it is \emph{prefix-closed} and \emph{right-extendable}.

\begin{proposition}\label{prop:closeness_and_extendability} Let $p > q \geq 1$ be coprime integers. Then:\\
	1. The language $\lan_{p/q}$ is prefix-closed; that is, every prefix of a word in $\lan_{p/q}$ also belongs to $\lan_{p/q}$.\\
	2. The language $\lan_{p/q}$ is right-extendable: for every $u \in \lan_{p/q}$, there exists a letter $a \in \{0, \dots, p-1\}$ such that $ua \in \lan_{p/q}$. More precisely, if $u \in \lan_{p/q}$, then 
	\[
	ua \in \lan_{p/q} \iff ua \neq \mathtt{0} \text{ and } a = -p \cdot \val_{p/q}(u) \mod{q}.
	\] 
\end{proposition}

\begin{remark}
	By definition, no expansion begins with the letter $0$. Therefore, in Proposition~\ref{prop:closeness_and_extendability} (Assertion 2), we must explicitly exclude $0$ from the set of extensions of the empty word. This explains the condition $ua \neq \mathtt{0}$ in the equivalence. 
\end{remark}

\begin{proof} Let $u \in \lan_{p/q}$.\\ 
	1. Let $l$ be the length of $u$. Applying the first step of the MD algorithm shows that the prefix of length $l - 1$ of $u$ is the expansion of the integer
	$\left\lfloor qn/p \right\rfloor$
	in  base $p/q$.\\
	2. Let $n = \val_{p/q}(u)$. A letter $a \in \{0,\ldots,p-1\}$ extends $u$ if and only if $\val_{p/q}(ua)$ is an integer and $ua \neq \mathtt{0}$. Since 
	\[
	\val_{p/q}(ua) = \frac{p}{q} n + \frac{a}{q},
	\]
	this holds if and only if $a = -p n \mod{q}$ and $ua \neq \mathtt{0}$.
	Furthermore, since $q < p$, such a letter $a$ always exists in the alphabet $\{0, \dots, p-1\}$.
\end{proof}

\begin{corollary}\label{cor:csq_MDalgo}
	1. The smallest letter extending the empty word is always $q$, while the smallest letter extending any other  $u  \in \lan_{p/q}$ belongs to $\{0,\ldots,q-1\}$. The largest letter extending any $u  \in \lan_{p/q}$ belongs to $\{p-q+1,\ldots,p-1\}$. \\
	2. The set of letters extending a non-empty word $u$  depends only on the residue class of $\mathrm{val}_{p/q}(u)$ modulo $q$.\\
3. More generally, for every $l \in\N$, the set of finite words $v$ of length $l$ that extend a non-empty word $u$ depends only on the residue class of $\val_{p/q}(u)$ modulo $q^l$.\end{corollary}

\subsection{The tree $\tree_{p/q}$ associated with the language of integers }

Since $\lan_{p/q}$ is prefix-closed and right-extendable, it can be represented as an infinite directed tree. This tree plays a major role in \cite{AFS08}, as it is closely related to the representation of real numbers (and not only integers) in rational base. The tree structure of rational base number systems has been further studied in \cite{MS17,AMS18}.
\medskip

In base $p/q$, the tree $\tree_{p/q}$ associated with the language of integers is defined as follows:
\begin{itemize}
	\item The nodes are the words $u \in \lan_{p/q}$, each labeled by both $u$ and its valuation $n \in \mathbb{N}$.
	\item There is an edge labeled by a letter $a \in \{0, \dots, p-1\}$ from $u$ to $v$ if $v = ua$.
\end{itemize}
When representing the tree $\tree_{p/q}$, edges are oriented from left to right, and outgoing edges are ordered from bottom to top according to the natural order on $\{0, \dots, p-1\}$.

\begin{example} The tree associated with the language $\lan_{7/3}$ is shown in Figure~\ref{fig:tree}. When $q = 1$, that is, in the classical integer base case, the language $\lan_{p/q}$ consists of all finite $p$-ary words that do not begin with $0$. It is therefore represented by the complete $p$-ary tree, from which  the node $0$ and the subtree rooted at this node are removed.
\end{example}

%%%%%%%%%%%%%%%%%%%%%%%%%FIGURE  TREE

% Macro pour deux lignes dans un sommet
\newcommand{\twolines}[2]{%
	\texttt{#1}\\[-2pt]\scriptsize \textcolor{black!60}{$\mathit{#2}$}
}
\tikzset{
	vertex/.style={
		circle,
		draw,
		align=center,
		inner sep=2pt,
		minimum size=9mm
	}
}

% DESSIN DE l'ARBRE
\begin{figure}[!h]
	\begin{center}
		\footnotesize
		\begin{tikzpicture}[scale=0.83]
		
		\begin{scope}[every node/.style={vertex}]
		\node (A) at (0,0) {\twolines{$\epsilon$}{0}};
		
		\node (B) at (2,-3) {\twolines{3}{1}};
		\node (C) at (2,3) {\twolines{6}{2}};
		
		\node (D) at (4.5,-4.5) {\twolines{32}{3}};
		\node (E) at (4.5,-1.5) {\twolines{35}{4}};
		\node (F) at (4.5,1.5) {\twolines{61}{5}};
		\node (G) at (4.5,4.5) {\twolines{64}{6}};
		
		\node (H) at (8,-5.8)   {\twolines{320}{7}};
		\node (I) at (8,-4.5) {\twolines{323}{8}};
		\node (J) at (8,-3.2) {\twolines{326}{9}};
		\node (K) at (8,-2) {\twolines{352}{10}};
		\node (L) at (8,-0.8) {\twolines{355}{11}};
		\node (M) at (8,0.8)   {\twolines{611}{12}};
		\node (N) at (8,2) {\twolines{614}{13}};
		\node (O) at (8,3.2) {\twolines{640}{14}};
		\node (P) at (8,4.5) {\twolines{643}{15}};
		\node (Q) at (8,5.8) {\twolines{646}{16}};
		
		\node (R) at (12,-6.5) {\twolines{3202}{17}};
		\node (S) at (12,-5.1)  {\twolines{3205}{18}};
		\end{scope}
		
		\begin{scope}[
		>={Stealth[black]},
		every node/.style={fill=white,circle},
		every edge/.style={draw=black!30, thick}
		]
		\path [->] (A) edge node {\texttt{3}} (B);
		\path [->] (A) edge node {\texttt{6}} (C);
		
		\path [->] (B) edge node {\texttt{2}} (D);
		\path [->] (B) edge node {\texttt{5}} (E);
		\path [->] (C) edge node {\texttt{1}} (F);
		\path [->] (C) edge node {\texttt{4}} (G);
		
		\path [->] (D) edge node {\texttt{0}} (H);
		\path [->] (D) edge node {\texttt{3}} (I);
		\path [->] (D) edge node {\texttt{6}} (J);
		\path [->] (E) edge node {\texttt{2}} (K);
		\path [->] (E) edge node {\texttt{5}} (L);
		\path [->] (F) edge node {\texttt{1}} (M);
		\path [->] (F) edge node {\texttt{4}} (N);
		\path [->] (G) edge node {\texttt{0}} (O);
		\path [->] (G) edge node {\texttt{3}} (P);
		\path [->] (G) edge node {\texttt{6}} (Q);
		
		\path [->] (H) edge node {\texttt{2}} (R);
		\path [->] (H) edge node {\texttt{5}} (S);
		\end{scope}
		
		% pointillés
		\foreach \y in {-4.4, -3.1, -1.9, -0.7, 0.9, 2.1, 3.3, 4.6, 5.9} {
			\draw (8.8,\y) node[anchor=north west] {$\bm{\ldots}$};
		}
		
		\foreach \y in {-6.4, -5} {
			\draw (12.8,\y) node[anchor=north west] {$\bm{\ldots}$};
		}
		
		\end{tikzpicture}
		\normalsize
		\caption{\label{fig:tree} The tree $\mathcal{T}_{7/3}$ representing the language of integers in rational base $7/3$. The nodes are  labeled simultaneously by $u \in \lan_{p/q}$ (above)  and $n=\val_{p/q}(u) \in \N$ (below).}
	\end{center}
\end{figure}
%%%%%%%%%%%%%%%%%%%%%%%%%%%%%%%%%%%%%%%

As shown in Figure~\ref{fig:tree}, a breadth-first traversal of $\tree_{7/3}$, in which nodes at the same depth are read from bottom to top, corresponds simply to counting. This property holds for every tree $\tree_{p/q}$ associated with a rational base number system. It can be viewed as a consequence of the next proposition, together with the fact that, by construction of $\tree_{p/q}$, a breadth-first traversal enumerates the expansions of integers with respect to the radix order. 

\begin{proposition}\label{prop:radix_increasing}The map
$\val_{p/q} : \lan_{p/q} \to \mathbb{N}$
is (bijective and) increasing with respect to the radix order.
\end{proposition}

Proposition~\ref{prop:radix_increasing} can be proved by induction on the length of the expansions, using the MD algorithm. It is worth noting that the general valuation map
\[\val_{p/q} : \{0, \dots, p-1\}^* \to \mathbb{Q}\]
is not monotone. For example, we have $\mathtt{3}\rad \mathtt{10}$, and yet, numerically, $\val_{7/3}(\mathtt{3})=1 > \val_{7/3}(\mathtt{10})= 7/9.$ (In fact, one of the objectives of \cite{AFS08} was precisely to introduce a representation of real numbers in rational base $p/q$ that respects the usual order on $\mathbb{R}$.) 

\bigskip

A second property of the trees $\tree_{p/q}$, already visible in Figure~\ref{fig:tree}, is their periodic structure. More precisely, if we perform a breadth-first traversal of the line graph of $\tree_{7/3}$---that is, if we read the edge labels  by increasing depth, and for edges  at the same depth from bottom to top---we obtain a periodic sequence with period $(3,6,2,5,1,4,0)$. %This is, again, true in every rational base. 

\begin{proposition}\label{prop:rythm_signature} A breadth-first traversal \emph{of the edges} of $\tree_{p/q}$ yields the $p$-periodic sequence $(a_k)_{k\geq0}$ in $\{0,\ldots,p-1\}^{\N}$, defined by $a_0=q$ and \[a_{k+1}=a_k+q \mod p\] for every  $k \in \N$.
\end{proposition}

\begin{proof} Let $a, b \in \{0,\ldots,p-1\}$ be two consecutive edges in the breadth-first traversal of the line graph of $\tree_{p/q}$. If $a$ and $b$ originate from the same node $u$, then by Proposition~\ref{prop:closeness_and_extendability}, we have $b = a + q$.
	Otherwise, $a$ and $b$ originate from two consecutive nodes $u$ and $v$. In this case we have
	\[
	\val_{p/q}(vb) = \val_{p/q}(ua) + 1
	\quad \text{and} \quad
	\val_{p/q}(v) = \val_{p/q}(u) + 1,
	\]
	which implies
	\[
	\frac{p}{q}(\val_{p/q}(u)+1) + \frac{b}{q}
	=
	\frac{p}{q}\val_{p/q}(u) + \frac{a}{q} + 1.
	\]
	Hence $b = a + q - p$.
To conclude, note  that the first edge $a_0$ connects the node $0$, whose expansion is the empty word, to the node $1$, encoded by $q$. Therefore $a_0=q$.
\end{proof}

In \cite{MS17}, the periodic structure of the rational base trees $\tree_{p/q}$ is described using the notions of \emph{rhythm} and \emph{signature}.

\subsection{Minimal and maximal words}

Let $u \in \lan_{p/q}$. The \emph{minimal} (resp.\ \emph{maximal}) word with seed $u$ is the infinite word labeling the infinite path starting from the node $u \in \tree_{p/q}$, in which the smallest (resp.\ largest) edge is chosen at each branching node. We denote them by $\wmin(u)$ and $\wmax(u)$, respectively. By abuse of notation, it will sometimes be convenient to write them by $\wmin(n)$ and $\wmax(n)$, where $n \in \mathbb{N}$ is the valuation of $u$.

As in the Introduction, minimal and maximal words can equivalently be defined as follows: for each length $n \in \mathbb{N}$, their prefix of length $l$ is lexicographically minimal (resp.\ maximal) among all words of length $l$ that extend $u$ to an element in $\lan_{p/q}$.

\begin{example}\label{ex:wmin73eps} We read on Figure~\ref{fig:tree} that $\wmin_{7/3}(\eps)$ begins with $\mathtt{3202}$. Further computation shows that
\[\wmin_{7/3}(\eps)=
\mathtt{3202122220200012011010222102122101011102220120011100201010}...\]
Note that the presence of the letter $\mathtt{3}$ at the beginning of $\wmin_{7/3}(\eps)$ is exceptional. It arises from the fact that no expansion of an integer may begin with the digit $0$, which prevents the existence of an outgoing edge labeled $0$ from the node $\eps$.	
\end{example}

Interestingly, although rational bases were not yet formalized at that time, minimal words were first introduced by Mahler in the 1960s in connection with the study of $Z$-numbers (see Section~\ref{sssect:implies_mahler} for details). In~\cite{AFS08,Aki08}, it is shown that minimal and maximal words are related to multiple expansions of real numbers in rational base (see Section~\ref{sssect:linkwithconjectures}). In~\cite{AMS18}, they are studied under the name \emph{bottom} and \emph{top} words.

\medskip
As observed in Examples~\ref{ex:infinite_wmin} and \ref{ex:wmin73eps}, all minimal and maximal words---except $\wmin_{p/q}(\eps)$---are written over the subalphabets $\{0,\dots,q-1\}$ and $\{p-q+1,\dots,p-1\}$, respectively. This follows from Corollary~\ref{cor:csq_MDalgo}.

Furthermore, by Proposition~\ref{prop:rythm_signature}, minimal and maximal words are pairwise related. More precisely, for every $n\in \N$, the maximal word with seed $n$ becomes equal to the minimal word with seed $n+1$ after renaming its letters according to the substitution
\[\begin{array}{lcll}
\sigma : &\{p-q+1,\dots,p-1\}& \to &\{0,\dots,q-1\}, \\
  &a &\mapsto &a+q-p.
  \end{array}
\]

\begin{example}\label{ex:lien_min_max}
Since $\rep_{7/3}(0)=\eps$ and $\rep_{7/3}(1)=\mathtt{3}$, if we know that
\[
\wmax_{7/3}(\eps)=
\mathtt{646566664644456455454666546566545455546664564455544645454...,}
\]
then, by replacing $4, 5, 6$ with $0, 1, 2$, respectively, we obtain
	\[
	\wmin_{7/3}(\mathtt{3})=
	\mathtt{202122220200012011010222102122101011102220120011100201010...}
	\]
\end{example}

\begin{corollary}\label{cor:conj1_redundant}
The two statements in Conjecture~\ref{conj:normality} are equivalent.
\end{corollary}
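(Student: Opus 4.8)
The plan is to derive Corollary~\ref{cor:conj1_redundant} directly from Lemma~\ref{lemma:link_wmin_wmax}, which states that $\wmax_{p/q}(u) = \sigma(\wmin_{p/q}(\suc_{p/q}(u)))$, where $\sigma$ is the letterwise shift $i \mapsto i + p - q$. The key observation is that $\sigma$ is a \emph{bijection} between the subalphabets $\{0,\dots,q-1\}$ and $\{p-q,\dots,p-1\}$, and this bijection is factor-preserving: a finite word $v$ of length $\ell$ occurs in an infinite word $w$ if and only if $\sigma(v)$ occurs in $\sigma(w)$, and moreover the occurrences are in exact positional correspondence. Hence the frequency with which $v$ appears in $\wmin_{p/q}(\suc(u))$ equals the frequency with which $\sigma(v)$ appears in $\wmax_{p/q}(u)$.

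First I would recall the definition of normality and note that normality of $\wmin_{p/q}(\suc(u))$ over $\{0,\dots,q-1\}$ means every word of length $\ell$ over this alphabet occurs with limit frequency $1/q^\ell$. Applying $\sigma$ letterwise transports each such length-$\ell$ word $v$ to the word $\sigma(v)$ over $\{p-q,\dots,p-1\}$, and since $\sigma$ is a bijection on length-$\ell$ words between the two alphabets, the $q^\ell$ words over $\{0,\dots,q-1\}$ are in bijection with the $q^\ell$ words over $\{p-q,\dots,p-1\}$. Because $\sigma$ preserves factor occurrences and their frequencies, $\wmin_{p/q}(\suc(u))$ is normal over $\{0,\dots,q-1\}$ if and only if $\wmax_{p/q}(u) = \sigma(\wmin_{p/q}(\suc(u)))$ is normal over $\{p-q,\dots,p-1\}$.

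The final step is to check that this equivalence, quantified correctly over seeds, matches exactly the two statements of Conjecture~\ref{conj:normality}. As $u$ ranges over $\lan_{p/q}$, the successor $\suc(u)$ ranges over all of $\lan_{p/q}$ except the empty word $\eps$ (since $\suc$ is a bijection from $\lan_{p/q}$ onto $\lan_{p/q} \setminus \{\eps\}$, the empty word having no predecessor). Thus the statement ``$\wmax_{p/q}(u)$ is normal over $\{p-q,\dots,p-1\}$ for all $u \in \lan_{p/q}$'' is equivalent, via Lemma~\ref{lemma:link_wmin_wmax}, to ``$\wmin_{p/q}(v)$ is normal over $\{0,\dots,q-1\}$ for all $v \in \lan_{p/q}$ with $v \neq \eps$.'' These are precisely the two halves of Conjecture~\ref{conj:normality}, so the two statements are equivalent.

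I expect no serious obstacle here; the only points requiring a line of justification are that $\sigma$ genuinely preserves factor frequencies (immediate, as it is an injective letterwise renaming applied uniformly) and that $\suc$ realizes the claimed bijection between seed sets. The latter follows from the fact that the odometer defines a bijection on $\lan_{p/q}$ (expansions are in order-preserving correspondence with $\N$, by Proposition~\ref{prop:unicity_rep}), with $\eps = \rep_{p/q}(0)$ being the unique expansion with no predecessor in $\N$. The whole argument is essentially a transport-of-structure along the bijection $\sigma$, so the corollary is a routine consequence of the lemma.
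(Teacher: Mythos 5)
Your proof is correct and takes essentially the same route as the paper, whose one-line argument combines Lemma~\ref{lemma:link_wmin_wmax} with the observation that $\suc(\lan_{p/q})=\lan_{p/q}\setminus\{\eps\}$. You simply make explicit the details the paper leaves implicit, namely that the letterwise renaming $\sigma$ preserves factor occurrences and hence limit frequencies, and that $\suc$ is a bijection onto the nonempty seeds.
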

Therefore, throughout most of the paper, we work with minimal words. This preference is justified by their connection with the operator 
\[T_{p/q}=\Big\lceil \frac{p}{q} \bm{\cdot}\Big\rceil,\]
 which appears in Conjecture~\ref{conj:equidistribution}.

\begin{question} In \cite[Problem~71]{AMS18}, the authors ask whether all minimal words in rational base $p/q$ are of the ``same kind'', that is, whether they can be transformed into one another by a finite-state machine. To the best of our knowledge, this question remains open at the time of writing.
\end{question}

%%%%%%%%%%%%%%%%%%%%%%%
%%%%%%%%%%%%%%%%%%%%%

%       SECTION 3

%%%%%%%%%%%%%%%%%%%%%%%%
%%%%%%%%%%%%%%%%%%%%%%%%%
\section{Links between conjectures}\label{sect:links_between_conjectures}

In this section, we prove Theorem~\ref{th:conj_equivalence}, which asserts that our Conjectures~\ref{conj:normality} and \ref{conj:equidistribution} are equivalent. We further prove Theorem~\ref{th:impliesMahler} , which asserts that our Conjecture~\ref{conj:normality} is stronger than that of Mahler (1968) in the general case where $p$ and $q$ are coprime integers satisfying $1<q<p<q^2$. We also discuss the connections between our conjectures and three other earlier conjectures: the first by Akiyama (2008), the second by Mossinghoff and Dubickas (2009), and the third, the famous Collatz conjecture. 

\subsection{Proof of Theorem~\ref{th:conj_equivalence}}

We start with a lemma that connects minimal words in rational base $p/q$ with the operator $T_{p/q}=\lceil \frac{p}{q} \bm{\cdot}\rceil$.

\begin{lemma}\label{lemma:link_wmin_operatorT}
	%TODO LEO: verifier qu'il faut bien exclure le mot vide, et que ca marche encore pour les bases entieres VALIDE PAR LEO
	Let $p>q\geq1$ be two coprime integers. Let $u$ be a non-empty word in $\lan_{p/q}$ and $l$ a nonnegative integer. Denote by $\wmin_{p/q}(u,l)$ the word formed by the first $l$ letters of $\wmin_{p/q}(u)$, and by $\nmin_{p/q}(u,l)$ the valuation of the expansion $u\cdot\wmin_{p/q}(u,l)$, where the symbol $\cdot$ denotes the concatenation:
\[\nmin_{p/q}(u,l) := \val_{p/q}(u\cdot\wmin_{p/q}(u,l)).\]
	Then we have
		\[ \val_{p/q}(u\cdot\wmin_{p/q}(u,l)) = T^l_{p/q}(\val_{p/q}(u)).\]
\end{lemma}

\begin{example}
	Continuing our example in base $7/3$, one reads in Figure~\ref{fig:tree} that 
	\[ \begin{cases}
	\nmin_{7/3}(\mathtt{3},1) = 3,\\
	\nmin_{7/3}(\mathtt{3},2) = 7,\\
	\nmin_{7/3}(\mathtt{3},3) = 17 ;
	\end{cases}\]
	which indeed coincides with the first three iterations of $T_{7/3}$ on $\val_{7/3}(\mathtt{3}) =1$.    
\end{example}

\begin{proof}[Proof of Lemma~\ref{lemma:link_wmin_operatorT}]
	%TODO LEO: Bien verifier la preuve, en particulier l'exclusion du mot vide. VALIDE PAR LEO
	Let $l \in \N$. It follows from Proposition~\ref{prop:closeness_and_extendability} that	
	\[\nmin_{p/q}(u,l+1) = \frac{p}{q} \nmin_{p/q}(u,l) +  \frac{a}{q}, \]
	where $a$ is the smallest nonnegative integer such that $a=-p\cdot\nmin_{p/q}(u,l) \mod q$. Hence
	\[\nmin_{p/q}(u,l+1) = \Big\lceil \frac{p}{q} \nmin_{p/q}(u,l)\Big\rceil. \]
	We conclude by noting that $\nmin_{p/q}(u,0)=\val_{p/q}(u)$.
\end{proof}

\begin{remark}\label{rk:how_to_compute_min_words}
	Still under the condition $u\neq\eps$, the letter $a$ calculated in the proof of Lemma~\ref{lemma:link_wmin_operatorT}, that is, the smallest nonnegative integer such that
	\[a = -p\cdot\nmin_{p/q}(u,l) \mod q\] is the $(l+1)$-th letter of the minimal word $\wmin_{p/q}(u)$.
\end{remark}

%Note that although the theory of rational base was not developed at that time, the content of Lemma~\ref{lemma:congruence_mod_ql} was already known to Mahler (see \cite[Section~8]{Mah68}), and appears in subsequent works about $Z$-numbers.

\bigskip
We are now in a position to prove  Theorem~\ref{th:conj_equivalence}, that is, that our Conjectures~\ref{conj:normality} and \ref{conj:equidistribution} are equivalent.

\begin{proof}[Proof of Theorem~\ref{th:conj_equivalence}] Let
	\[ \begin{array}{llll}
	f : & \mathbb{Z}/q^l\mathbb{Z} & \longrightarrow & \{0,\dots,q-1\}^l \\
	& n & \longmapsto & \wmin_{p/q}(\rep_{p/q}(n),l).
	\end{array}\]
By Corollary~\ref{cor:csq_MDalgo}, the function $f$ is well-defined and bijective (see also \cite[Lemme~4.14]{Mar16}).
	Let $n \in \N_{>0}$ be an integer, and denote by $u$ its expansion in rational base $p/q$.
	We proceed by equivalences. For readability, the indices $p/q$ are omitted.

	\bigskip
	
	\noindent For every $l\in \N$, the sequence $(T^m(n))_m$ is equidistributed in the residue classes modulo $q^l$ 
	\small
	\[
	\begin{array}{cl}
	\iff & \text{\footnotesize $\forall l\in\N,$ $\forall r \in \{0,\!...,q^l-1\}$, \small } \; \underset{N\to\infty}{\lim}  \frac{\card \big\{m\in \{0,\!...,N\!-\!1\} \;|\; T^m(n) \equiv r \bmod q^l\big\}}{N} = \frac{1}{q^l}\\ \\
	\underset{\text{Lemma~\ref{lemma:link_wmin_operatorT}}}{\iff} & \text{\footnotesize $\forall l\in\N,$ $\forall r \in \{0,\!...,q^l-1\}$, \small } \;\underset{N\to\infty}{\lim} \frac{\card \big\{m\in \{0,\!...,N\!-\!1\} \;|\; \nmin(u,m)\equiv r \bmod q^l\big\}}{N}=\frac{1}{q^l} \\ \\
	{\iff} & \text{\footnotesize $\forall l\in\N,$ $\forall r \in \{0,\!...,q^l-1\}$, \small } \;\underset{N\to\infty}{\lim} \frac{\card \big\{m\in \{0,\!...,N\!-\!1\} \;|\; \wmin(\rep(\nmin(u,m)),l)=f(r)\big\}}{N}=\frac{1}{q^l}
	\end{array}
	\]
	\normalsize
	
	\bigskip
	\noindent (In the last equivalence, we simply used the bijective function $f$.) At this point, it is useful to observe that, by definitions of $\nmin$ and $\wmin$:
	\[
	\wmin(\rep(\nmin(u,m)),l) = \wmin(u.\wmin(u,m),l) = \wmin(u)[m+1:m+l],
	\]
	where $\wmin(u)[m+1:m+l]$ denotes the subword of length $l$ of $\wmin(u)$ spanning from its $(m+1)$-th to its $(m+l)$-th letter.
	
	\bigskip
	With this simplification, we pursue the chain of equivalence:
	\small
	\[
	\begin{array}{cl}
	{\iff} & \text{\footnotesize $\forall l\in\N,$ $\forall v \in \{0,\!...,q-1\}^l$, \small } \;\underset{N\to\infty}{\lim} \frac{\card \big\{m\in \{0,\!...,N\!-\!1\} \;|\; \wmin(u)[m+1:m+l]=v\big\}}{N} = \frac{1}{q^l} \\ \\
	\iff & \text{$\wmin(u)$ is normal over the alphabet $\{0,\dots,q-1\}$.}
	\end{array}
	\]
	\normalsize
	
	We thus proved that the equidistribution of all sequences $(T^m(n))_m$, for $n \in \N\setminus\{0\}$, is equivalent to the normality of all minimal words $\wmin(u)$, for $u\in \lan_{p/q}\setminus\{\eps\}$. 
	By using Corollary~\ref{cor:conj1_redundant} to treat the question of maximal words, the proof of Theorem~\ref{th:conj_equivalence} is complete.
\end{proof}

%%%%%%%%%%%%%%%%%%%%

\subsection{Proof of Theorem~\ref{th:impliesMahler}} \label{sssect:implies_mahler}

In this section, we show that the validity of our normality Conjecture~\ref{conj:normality} would imply the validity of Conjecture~\ref{conj:mahler_generalized}, which is one generalization of a celebrated conjecture by Mahler from 1968.

Before doing so, we briefly recall the context in which Mahler's conjecture emerged. 
It relates to a classical and still largely unresolved problem: given two real numbers $x > 0$ and $\alpha>1$,  describe the distribution modulo $1$ of the sequence 
$(x\alpha^n)_{n\in \N}$.
A historical example, popularized by Mahler, and still poorly understood today, is the case $\alpha = 3/2$.

\begin{question}[asked to Mahler by a Japanese colleague] \label{question:mahler} Do there exist positive real numbers $x$ (called \emph{$Z$-numbers}) for which the sequence of fractional parts
	\[(\{x(3/2)^n\})_{n\in \N}\]
	is contained in the half interval $[0,1/2)$?
\end{question}

In 1968, Mahler conjectured that the answer is negative: $Z$-numbers do not exist. 
By an old theorem of Weyl \cite{Wey16}, it was already clear that the set of $Z$-numbers has Lebesgue measure $0$. Mahler furthermore proved that the set of $Z$-numbers is at most countable, and of  density $0$: the number of $Z$-numbers less than $x$ is $O(x^{0.7})$ (this bound was later improved by Flatto in \cite{Fla92}). His proof relies on the study of a class of binary words, which turn out to be exactly the minimal words in base $3/2$. 
At the time of writing, Question~\ref{question:mahler} remains unsolved, and many generalizations continue to be investigated (see, for example, \cite[chapter 3]{Bug12} for a survey until 2012, and \cite{Dub19} for a recent reference). 

\medskip

In the present article, we focus on a generalization of Question \ref{question:mahler}
that preserves its connection with minimal words.

\begin{question}\label{question:mahler_generalized} Let $p>q>1$ be coprime integers. Do there exist positive real numbers $x$ (called \emph{$Z_{p/q}$-numbers}) for which the sequence of fractional parts
	\[(\{x(p/q)^n\})_{n\in \N}\]
	is contained in the interval $[0,1/q)$?
\end{question}

We believe that there exists no $Z_{p/q}$-numbers when $p<q^2$.
\medskip

The non-existence of $Z_{p/q}$-numbers was already conjectured by Dubickas and Mossinghoff in the restricted case $1<q<p<q(q-1)$ \cite[Proposition~3.1]{DM09}. Along these lines, they establish after large-scale computations that there exist no $Z_{p/q}$-numbers smaller than $2^{57}, 10^{32}$, and $3^{42}$ for $p/q= 3/2$, $4/3$, and $5/3$, respectively (see \cite{DM09}, Theorem~5.1 and Tables~4 and 5 for more values of $p/q$).

\medskip

Moreover, we know that one can find infinitely many $Z_{p/q}$-numbers for every pair of coprime integers such that $p>q^2$. This result was established by Tijdeman in the particular case $q=2$ \cite[item (ii) p2]{Tij72}, and by Flatto in the general case \cite[Theorem 7.3, with $t=1/q$]{Fla92}.

\medskip

We now recall and prove our Theorem \ref{th:impliesMahler}.

\medskip

\noindent \textbf{Theorem \ref{th:impliesMahler} (reminder)}  
\emph{Let $p>q>1$ be coprime integers such that $p<q^2$. If all minimal words in rational base $p/q$ (except the one with seed the empty word) are normal over the alphabet $\{0,\dots,q-1\}$, then $Z_{p/q}$-numbers do not exist. } 

%%%%%%%%% nouvelle preuve
\begin{proof}
We assume that Conjecture~\ref{conj:normality} is true. Let $p>q>1$ be coprime integers such that $p<q^2$. We argue by contradiction. Let $x\in\R_{>0}$ be a $Z_{p/q}$-number. For every integer $n\geq 0$, we set $x_n:=x(p/q)^n$, $g_n:=\lfloor x_n\rfloor$ and $r_n:=\{x_n\}$. It follows from the definition that every $x_n$, for $n \in \N$, is also a $Z_{p/q}$-number. Therefore, without loss of generality, we may assume that $x \geq 1$. 

By Lemma~3.1 in \cite{DM09} (which is an immediate generalization of the work of Mahler in the case $p<q^2$), the sequences $(g_n)_n$ and $(r_n)_n$ fulfill the following relations: for every $n\in\N$,
\begin{equation}\label{eq:mots_de_malher}
g_{n+1}=\frac{pg_n+\alpha_n}{q} \hspace{0.5cm}\text{ and }\hspace{0.5cm} r_{n+1}=\frac{pr_n-\alpha_n}{q},
\end{equation}
where $\alpha_n\in\{0,\ldots,q-1\}$ is the smallest nonnegative integer such that $g_{n+1}$ is an integer, or, equivalently, such that $\alpha_n = -pg_n \mod q$. Therefore, by Lemma~\ref{lemma:link_wmin_operatorT}, since $g_0 = \lfloor x \rfloor \geq 1$, we have 
\[g_{n+1} = \Big\lceil \frac{p}{q}g_n\Big\rceil =\nmin(g_0,n). \]
Hence, by Remark~\ref{rk:how_to_compute_min_words}, 
\[\alpha_0\alpha_1\alpha_2\cdots = \wmin_{p/q}(g_0) = 
\wmin_{p/q}(\lfloor x \rfloor).\]
By Conjecture~\ref{conj:normality}, the infinite word $\wmin_{p/q}(\lfloor x\rfloor)\in\{0,\ldots,q-1\}^\N$ is normal. In particular, it must contain the subword $(q-1)(q-1)$ at least once. Let $m\in\N$ such that $\alpha_m=\alpha_{m+1}=q-1$. We thus have
\begin{equation}\label{eq:rm}
r_{m+1}=\frac{p}{q}r_m-\frac{q-1}{q} \quad\text{ and }\quad r_{m+2}=\frac{p}{q}r_{m+1}-\frac{q-1}{q}.
\end{equation}
Since $x$ is a $Z_{p/q}$-number, we also have  $r_m<1/q$, hence
\[
 0\leq r_{m+2}=\frac{p^2}{q^2}r_m-\frac{(p+q)(q-1)}{q^2}<\frac{p^2}{q^3}-\frac{(p+q)(q-1)}{q^2}.
\]
This yields
\begin{equation}\label{eq:pol_in_p}
p^2-q(q-1)p-q^2(q-1)>0.\end{equation}
Our aim is to prove that the inequality \eqref{eq:pol_in_p} is impossible under the condition $p<q^2$. Denote by $f(p)$ the left-hand side of \eqref{eq:pol_in_p}. The function $p\mapsto f(p)$ is a polynomial of degree $2$ with a positive discriminant. Denote by $r_1$ and $r_2$ its two roots, with $r_1\leq 0$ and $r_2\geq 0$.

We now prove that $q^2-r_2<1$. First, a direct computation shows that
\[
r_2=\frac{q(q-1)+\sqrt{q^2(q^2+2q-3)}}{2} \quad\text{ and }\quad q^2 = \frac{q(q-1)+\sqrt{q^2(q^2+2q+1)}}{2}.
\]
Hence
\[
	\begin{array}{rcl}
		q^2-r_2 &=& \ds\frac{q}{2}\big(\sqrt{q^2+2q+1} - \sqrt{q^2+2q-3}\,\big)\\
		&=& \ds\frac{2q}{\sqrt{q^2+2q+1} + \sqrt{q^2+2q-3}} < \frac{2q}{\sqrt{q^2}+\sqrt{q^2}} =1.
	\end{array}
\]
(In the last inequality, we used the estimate $2q-3\geq 0$, which is true since $q\geq2$.) The inequality $q^2-r_2<1$ is thus proven.

Finally, it follows from $r_1\leq 0$ that $ p \geq r_1 $, and from the double inequality $p<q^2$ and $q^2-r_2<1$, together with the fact that $p$ is an integer, that $ p\leq r_2$. Since 
$f$ is nonpositive between its roots, we have $f(p)\leq 0$, which contradicts \eqref{eq:pol_in_p}. The proof of Theorem~\ref{th:impliesMahler} is complete.
\end{proof}

We conclude this section with two remarks and a question.

\begin{remark} When $p>q^2$, there is no contradiction between the existence of $Z_{p/q}$-numbers and the expected normality of minimal words in rational base $p/q$. Indeed, under this condition, 
	the connection between $Z_{p/q}$-numbers and minimal words vanishes (the equalities \eqref{eq:mots_de_malher} in the proof of Proposition~\ref{th:impliesMahler} no longer hold). 
\end{remark}

\begin{remark} In the proof of Theorem \ref{th:impliesMahler}, we only marginally used the normality of minimal words. This leaves, in theory, considerable room for the possibility that Conjecture \ref{conj:mahler_generalized} is true while our Conjecture~\ref{conj:normality} is not. It also raises the following question.
\end{remark}

\begin{question}\label{question:further_mahler}
	Let $p>q$ be coprime integers such that $p<q^2$. Could the normality of minimal words in rational base $p/q$, if true, provide further insight into the distribution of the fractional parts $\{x(p/q)^n\}$, for $n \in \N$?  
\end{question}

\subsection{Connection with three other earlier conjectures}\label{sssect:linkwithconjectures}

Hereafter, we show that the validity of our normality Conjecture~\ref{conj:normality}  would imply the truth of two earlier conjectures: one by Akiyama (2008), concerning the existence of triple expansion of real numbers in rational base, and the other by Dubickas and Mossinghoff (2009), concerning the termination of certain iterated maps on integers.  
We also discuss the connection between our conjecture and the famous Collatz conjecture.

\paragraph{A conjecture by Akiyama.} 
To state the conjecture, it is useful to recall how positive real number are represented in rational base $p/q$. 
By definition, an infinite word $w=a_ka_{k-1}\dots a_0  a_{-1}a_{-2}\dots$ is an expansion of a positive real number $x$ if:
\begin{itemize}
	\vspace{-0.2cm}
	\item[-] all finite prefixes of $w$ belong to the language  $\{0^ku : k \in \N, u \in \lan_{p/q}\}$; or, equivalently, if after removing the eventual leading $0$s, the word $w$ labels an infinite path starting from the node $\eps$ in the tree $\tree_{p/q}$;
	\vspace{-0.2cm}
	\item[-] its valuation equals $x$:
	\[x=\frac{1}{q} \sum\limits_{-\infty}^{l=k} a_{l}\Big(\frac{p}{q}\Big)^{l}.\] 
\end{itemize}

As it is already the case for integer bases, for every rational base, there  exist countably many real numbers that admit several expansions. Akiyama's conjecture claims that these real numbers admit exactly two expansions. 

\begin{conjecture}[Akiyama, 2008, \cite{Aki08}]\label{conj:triple_expansions}
	Let $p>q$ be coprime integers. Every positive real number $x$ admits at most two expansions in rational base $p/q$.
\end{conjecture}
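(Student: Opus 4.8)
The plan is to \emph{deduce} Conjecture~\ref{conj:triple_expansions} from the normality Conjecture~\ref{conj:normality} (which, as an earlier statement, I may assume), uniformly for all coprime $p>q$. When $q=1$ the base is an integer base and the classical fact that a real number has at most two expansions settles the claim immediately, so I focus on $q\geq 2$. The engine is the reduction advertised in the introduction: normality of $\wmin_{p/q}(u)$ over $\{0,\dots,q-1\}$ forces the letter $\mathtt 0$ to occur with frequency $1/q>0$, hence \emph{at least once} (indeed infinitely often) in every minimal word; dually, by Lemma~\ref{lemma:link_wmin_wmax} and Corollary~\ref{cor:conj1_redundant}, the letter $p-1$ occurs in every maximal word. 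The whole argument then rests on proving the contrapositive of the weaker statement quoted in the introduction: \emph{if some real number had three expansions in base $p/q$, then some minimal word would never use the letter $\mathtt 0$}, contradicting normality.

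To set this up I recall that an expansion of a real $x$ is an infinite path in the prefix tree of $\lan_{p/q}$ whose valuation equals $x$, and that the multiple-expansion phenomenon is governed by minimal and maximal words \cite[Proposition~35 and Theorem~36]{AFS08}. First I would carry out the branch analysis for \emph{two} expansions: two expansions of $x$ agree on a common prefix $u$ and first differ at one position, where (by the residue constraint underlying the odometer) the two admissible digits differ by a multiple of $q$; the lower branch is then forced to continue along a maximal word and the upper branch along a minimal word. Lemma~\ref{lemma:link_wmin_wmax} pins this coincidence exactly, via $\wmax_{p/q}(u')=\sigma(\wmin_{p/q}(\suc(u')))$ together with $\val_{p/q}(\suc(u'))=\val_{p/q}(u')+1$, so that a double expansion is encoded by a node $u'$ and this $\sigma$-identity.

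The key step is the \emph{trisection} obstruction. A third expansion forces a second branching along one of the two already-forced paths. Along the upper (minimal-word) branch, a further admissible split at some descendant node $W$ requires the value of the minimal continuation $W\,\wmin_{p/q}(W)$ to coincide with that of the next admissible branch; feeding the $\sigma$-identity and $\val_{p/q}(\suc(W))=\val_{p/q}(W)+1$ into this equality forces the minimal word emitted at $W$ to consist entirely of digits $\geq 1$ on a whole tail, i.e.\ to \emph{avoid the letter $\mathtt 0$}. (Symmetrically, a further split along the lower/maximal branch forces a maximal word that avoids $p-1$, which is the same contradiction after applying Corollary~\ref{cor:conj1_redundant}.) Either way this contradicts the occurrence of $\mathtt 0$ guaranteed by Conjecture~\ref{conj:normality}, and the conjecture of Akiyama follows for all coprime $p>q$.

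I expect the main obstacle to be precisely this last translation, \emph{uniformly} in $p$ and $q$. When $p\geq 2q$ the arithmetic is forgiving: crude magnitude bounds on fractional tails already preclude a third branch unconditionally, which is why that range is settled without any hypothesis. The difficulty is concentrated in the range $q<p<2q$, where, as the paper itself stresses (for instance $\val_{7/3}(\mathtt 3)=1>\val_{7/3}(\mathtt{10})=7/9$), the radix order and the value order on tails \emph{disagree}; consequently the minimal and maximal \emph{words} are not the value-extremal completions, and the interval/overlap estimates that work for $p\geq 2q$ collapse. The correct route is therefore to abandon magnitude heuristics and use the \emph{exact} value identities carried by Lemma~\ref{lemma:link_wmin_wmax} in tandem with the path characterization of \cite{AFS08} to pin down the $\mathtt 0$-avoidance rigidly. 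Once that structural lemma is in place, the contradiction with Conjecture~\ref{conj:normality} is immediate, establishing Conjecture~\ref{conj:triple_expansions} in full.
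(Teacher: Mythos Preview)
Your core strategy coincides with the paper's: normality of $\wmin_{p/q}(u)$ over $\{0,\dots,q-1\}$ forces the letter $\mathtt{0}$ to appear, and a triple expansion would produce a minimal word that avoids $\mathtt{0}$, yielding a contradiction. Where you diverge is in the effort spent on the structural step. You attempt to rederive, via an explicit branch-and-trisection analysis, the fact that a third expansion forces some minimal word to avoid $\mathtt{0}$, and you yourself flag this derivation as incomplete in the range $q<p<2q$. The paper bypasses all of this by invoking directly from \cite{AFS08} the equivalence: a real number admits at least three expansions in base $p/q$ if and only if some infinite word is \emph{simultaneously} a minimal word and a maximal word. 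Since minimal words lie in $\{0,\dots,q-1\}^{\N}$ and maximal words in $\{p-q,\dots,p-1\}^{\N}$, and $0<p-q$, such a word cannot contain the letter $\mathtt{0}$; the contradiction with normality is then immediate, uniformly in $p$ and $q$. Your branch analysis is heading toward exactly this characterization---your ``lower branch forced to be maximal, upper branch forced to be minimal'' is precisely the mechanism underlying it---but there is no need to reprove it: citing \cite{AFS08} collapses your sketch to two lines and removes the acknowledged gap entirely.
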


Multiple expansions in rational base are intimately connected to minimal and maximal words. Akiyama's conjecture is easily verified for $p\geq2q-1$: see \cite[Corollary 38]{AFS08}, %\cite[Corollary 1.3]{Aki08} 
or Remark \ref{rk:notripleexpansion} below. By contrast, in the case $p<2q-1$, the conjecture is believed to be difficult.
\medskip

We now prove that our normality conjecture, if true, would straightforwardly confirm Akiyama's conjecture.

\begin{proposition}\label{prop:impliesAkiyama}
	The validity of the normality Conjecture \ref{conj:normality} implies the truth of Akiyama's Conjecture \ref{conj:triple_expansions}.
\end{proposition}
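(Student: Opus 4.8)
The plan is to deduce Akiyama's Conjecture~\ref{conj:triple_expansions} from a single, far weaker consequence of normality: \emph{every minimal word contains the letter $0$ at least once} (normality forces $0$ to occur with frequency $1/q>0$, hence infinitely often). By Corollary~\ref{cor:conj1_redundant} it suffices to argue with minimal words, so the real goal is to show that if every $\wmin_{p/q}(u)$ contains a $0$, then no positive real number has three or more expansions.

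First I would record the \emph{built-in} double expansions and reduce Akiyama's conjecture to a statement about the operator $T_{p/q}$. For $u=\rep_{p/q}(N)$, Lemma~\ref{lemma:link_wmin_wmax} gives $\wmax_{p/q}(\rep_{p/q}(N))=\sigma(\wmin_{p/q}(\rep_{p/q}(N+1)))$; since $\sigma$ adds $p-q$ to every letter it adds exactly $1$ to the value of the fractional tail, so the two infinite words $\rep_{p/q}(N)\,\wmax_{p/q}(\rep_{p/q}(N))$ and $\rep_{p/q}(N+1)\,\wmin_{p/q}(\rep_{p/q}(N+1))$ are distinct expansions of one and the same real number. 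I denote its value by
\[\Lambda(M)=\lim_{l\to\infty}T_{p/q}^{\,l}(M)\,(q/p)^{l},\qquad M=N+1,\]
the identification with the $T_{p/q}$-orbit coming from Lemma~\ref{lemma:link_wmin_operatorT} (equivalently, $\Lambda(M)$ is $M$ plus the real value of the purely fractional word $\wmin_{p/q}(\rep_{p/q}(M))$). Using the description of multiple expansions in \cite[Proposition~35 and Theorem~36]{AFS08} (see also \cite[Conjecture~1.4]{Aki08}), these junction numbers are the only reals with several expansions, each carrying exactly the two above unless it coincides with another junction. Thus I only need one direction: a real number with three or more expansions must satisfy $\Lambda(M)=\Lambda(M')$ for two distinct positive integers $M<M'$.

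The crux is to show that such a coincidence forces some minimal word to omit $0$. I set $D_l=T_{p/q}^{\,l}(M')-T_{p/q}^{\,l}(M)\ge 1$. Monotonicity of $T_{p/q}$ together with $p>q$ gives $D_{l+1}\ge\lfloor pD_l/q\rfloor$, so $(D_l)$ is nondecreasing; and $\Lambda(M)=\Lambda(M')$ forces $D_l(q/p)^l\to 0$, which rules out geometric growth and pins $(D_l)$ to a bounded, hence eventually constant, value $c\ge 1$. Shifting the seed past the stabilization index $L$, I put $M_\ast=T_{p/q}^{\,L}(M)$ and $M_\ast'=T_{p/q}^{\,L}(M')=M_\ast+c$, so that $T_{p/q}^{\,l}(M_\ast')=T_{p/q}^{\,l}(M_\ast)+c$ for \emph{every} $l\ge 0$. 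Writing $T_{p/q}^{\,l+1}=(pT_{p/q}^{\,l}+\alpha_{l+1})/q$ for both orbits (Proposition~\ref{prop:algo_wmin}, Lemma~\ref{lemma:link_wmin_operatorT}) and subtracting shows that, for all $l$, the $(l+1)$-st letter of $\wmin_{p/q}(\rep_{p/q}(M_\ast))$ exceeds that of $\wmin_{p/q}(\rep_{p/q}(M_\ast'))$ by exactly $c(p-q)$. Since all letters are nonnegative, every letter of $\wmin_{p/q}(\rep_{p/q}(M_\ast))$ is at least $c(p-q)\ge p-q\ge 1$, so this minimal word contains no $0$, contradicting the hypothesis. (The same computation makes the coincidence outright impossible when $p\ge 2q$, recovering Akiyama's easy range.) Hence under normality every minimal word contains $0$, no coincidence $\Lambda(M)=\Lambda(M')$ can occur, and every positive real number has at most two expansions.

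The main obstacle is the reduction of the previous paragraph, namely establishing rigorously that \emph{three or more expansions imply a junction coincidence} $\Lambda(M)=\Lambda(M')$. The delicate point is that two expansions of the same real need not align at the radix point: they may have integer parts of different lengths and branch inside the integer part, so the minimal/maximal-word junction structure must be extracted from the raw definition of real expansions. This is precisely the content I would borrow from \cite[Proposition~35 and Theorem~36]{AFS08}; once that correspondence is in place, the $T_{p/q}$-dynamics and digit computation carried out above are elementary.
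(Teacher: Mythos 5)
Your proof is correct, and it takes a genuinely different route from the paper's after the shared starting point. Both arguments use only the weak consequence of Conjecture~\ref{conj:normality} that every minimal word contains the letter $0$, and both outsource the classification of multiple expansions to \cite{AFS08}; the difference lies in what is borrowed versus proved. The paper cites \cite{AFS08} in the form ``a real with at least three expansions exists iff some word is simultaneously a minimal and a maximal word,'' after which the conclusion is one line: maximal words live over $\{p-q,\dots,p-1\}$ (Remark~\ref{rk:proof_subalphabet}), so a single occurrence of $0$ settles it. You instead reduce to a coincidence of junction values $\Lambda(M)=\Lambda(M')$ with $M\neq M'$ and then argue by hand through the $T_{p/q}$-dynamics of Lemma~\ref{lemma:link_wmin_operatorT}; I checked this part and it is sound: $D_{l+1}\geq\lfloor pD_l/q\rfloor$ follows from $\lceil a+b\rceil\geq\lceil a\rceil+\lfloor b\rfloor$, the bound $D_l\leq q/(p-q)$ is forced because any excess $e_l=D_l-q/(p-q)>0$ propagates as $e_{l+1}\geq(p/q)e_l$ and contradicts $D_l(q/p)^l\to 0$, so the nondecreasing integer sequence $(D_l)$ stabilizes at some $c\geq 1$, and subtracting the digit recursions $qT^{l+1}=pT^l+\alpha_{l+1}$ for the two orbits gives $\alpha_{l+1}-\alpha'_{l+1}=c(p-q)$, whence every digit of $\wmin_{p/q}(\rep_{p/q}(M_\ast))$ is at least $c(p-q)\geq 1$. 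Two remarks on the comparison. First, your computation in the case $c=1$ is exactly Lemma~\ref{lemma:link_wmin_wmax} in disguise: it says $\wmin_{p/q}(\rep_{p/q}(M_\ast))=\sigma(\wmin_{p/q}(\rep_{p/q}(M_\ast+1)))=\wmax_{p/q}(\rep_{p/q}(M_\ast))$, so in effect you reprove, rather than bypass, the paper's ``simultaneously minimal and maximal'' criterion. Second, what your longer route buys is that it is quantitative and self-contained past the reduction: the constraint $c(p-q)\leq q-1$ on digits immediately recovers Akiyama's easy range $p\geq 2q-1$ (for $p=2q-1$ the word would be constant, which the aperiodicity of minimal words also excludes), and the stabilization of $(D_l)$ makes the junction dynamics explicit. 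What it costs is a slightly stronger reading of \cite[Proposition~35 and Theorem~36]{AFS08} than the paper needs---namely that junction numbers are the \emph{only} reals with multiple expansions, each carrying exactly the two listed ones unless junctions coincide, including the alignment of branches occurring at different depths after rescaling by powers of $p/q$---which you correctly flag as the delicate borrowed step; the paper leans on the same source, but in a formulation where the remaining argument is immediate.
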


\begin{proof}
	Let $p>q$ be coprime integers. It is a consequence of \cite[Section 5]{AFS08} that the existence of a real number admitting at least three expansions is equivalent to the existence of a word $w$ that is simultaneously a minimal and a maximal word.
	Since minimal words are written over the alphabet $\{0,\dots,q-1\}$
	and maximal words are written over the alphabet $\{p-q,\dots,p-1\}$, a single occurrence of the letter $0$ in a minimal word in base  $p/q$ is sufficient to prevent it from being a maximal word. Our normality conjecture, if true, obviously implies that the letter $0$ occurs in every minimal word. Therefore, no word can be simultaneously minimal and maximal.
\end{proof}

\begin{remark}\label{rk:notripleexpansion}
	Akiyama's conjecture, in the case $p\geq2q-1$, can easily be proven as follows. When $p>2q-1$, the two alphabets do not intersect, and therefore, no word can be simultaneously minimal and maximal, implying no triple expansion exists. When $p=2q-1$, the intersection of the two alphabets is the singleton $\{q-1\}=\{p-q\}$. Thus, a word that is simultaneously minimal and maximal must be written with one letter only and must be aperiodic (we recall from the Introduction that all minimal and maximal words are non-eventually periodic when $q\neq 1$). This is impossible.
\end{remark}

\paragraph{A conjecture by Dubickas and Mossinghoff.} In \cite[Section~1]{DM09}, the authors write the following question, whose positive answer they estimate as likely.

\begin{question}[Dubickas `4/3-problem']\label{question:DM09}
	Let $p,q$ be coprime integers such that $p>q>1$, and $S\subset\{0,\dots,q-1\}$ be a nonempty set. Is it true that the sequence of iterates of the map
	\[x \mapsto \begin{cases}
	\lceil px/q \rceil, \text{ if } x = s \bmod q, \text{ for some } s\in S,  \\ \text{STOP, otherwise.}
	\end{cases}
	\]
	terminates for any starting positive integer $x_0$?
\end{question}

\begin{proposition}\label{prop:implies43problem}
	If the normality Conjecture \ref{conj:normality} is true, then the answer to Question \ref{question:DM09} is `yes'.
\end{proposition}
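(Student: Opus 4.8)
The plan is to recognise Question~\ref{question:DM09} as a statement about the \emph{letters} of a minimal word, and then to invoke normality to guarantee that a ``stopping letter'' must eventually occur. Fix a starting integer $x_0 \in \N_{>0}$ and set $u := \rep_{p/q}(x_0) \in \lan_{p/q}\setminus\{\eps\}$, so that $\val_{p/q}(u) = x_0$. The map in Question~\ref{question:DM09} applies $x \mapsto \lceil px/q\rceil = T_{p/q}(x)$ exactly as long as the current value lies in a residue class belonging to $S$, and halts otherwise. Hence, as long as it has not halted, its orbit is precisely $x_n = T^n_{p/q}(x_0)$, which by Lemma~\ref{lemma:link_wmin_operatorT} equals $\nmin_{p/q}(u,n)$.

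Next I would translate the residue condition into a condition on the minimal word $\wmin_{p/q}(u)$. By Proposition~\ref{prop:algo_wmin}, its $(n{+}1)$-st letter $\alpha_{n+1}\in\{0,\ldots,q-1\}$ satisfies $\alpha_{n+1}\equiv -p\,\nmin_{p/q}(u,n)\equiv -p\,x_n \pmod q$. Since $p$ and $q$ are coprime, the map $\phi\colon r\mapsto(-pr \bmod q)$ is a bijection of $\{0,\ldots,q-1\}$ onto itself, so the residue $x_n \bmod q$ and the letter $\alpha_{n+1}=\phi(x_n \bmod q)$ determine one another. Writing $S':=\phi(S)$, the map continues at step $n$ exactly when $\alpha_{n+1}\in S'$ and halts exactly when $\alpha_{n+1}$ lies in the complementary subalphabet $\{0,\ldots,q-1\}\setminus S'$. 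Consequently, the orbit of $x_0$ fails to terminate if and only if the \emph{entire} word $\wmin_{p/q}(u)$ is written over $S'$.

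To conclude I would observe that, for the question to be non-degenerate, its stopping set must be nonempty, i.e. $S$ (and hence $S'$) must be a proper subset of $\{0,\ldots,q-1\}$; otherwise the map never halts and termination fails for trivial reasons unrelated to any conjecture. Granting this, choose any letter $a\in\{0,\ldots,q-1\}\setminus S'$. As $u\neq\eps$, normality of $\wmin_{p/q}(u)$ (Conjecture~\ref{conj:normality}) forces $a$ to occur with frequency $1/q>0$, hence at least once; thus $\wmin_{p/q}(u)$ is not written over $S'$, and by the previous step the orbit of $x_0$ reaches a stopping residue and terminates. Since $x_0$ was arbitrary, the answer to Question~\ref{question:DM09} is ``yes''.

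A word on where the content lies. Exactly as in the proof of Proposition~\ref{prop:impliesAkiyama}, only a very weak consequence of normality is actually needed: that \emph{one} prescribed letter occurs in every minimal word, not the full equidistribution of all blocks. The one genuinely delicate step in this plan is therefore the dictionary of the second paragraph---verifying that halting corresponds bijectively to the appearance of a fixed subalphabet inside $\wmin_{p/q}(u)$---after which termination is immediate. I thus expect no substantial obstacle beyond setting up this correspondence with care and explicitly excluding the degenerate case $S=\{0,\ldots,q-1\}$.
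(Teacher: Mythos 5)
Your proof is correct, but it takes a more self-contained route than the paper. The paper's own proof is a two-line reduction: it observes that Conjecture~\ref{conj:equidistribution} (equidistribution of the $T_{p/q}$-orbits in residue classes mod $q^k$) trivially yields a `yes' answer to Question~\ref{question:DM09} --- taking $k=1$, every residue class mod $q$ occurs along the orbit, so any proper stopping set is eventually left --- and then invokes the equivalence Theorem~\ref{th:conj_equivalence} to pass from Conjecture~\ref{conj:normality} to Conjecture~\ref{conj:equidistribution}. You instead bypass the equivalence theorem and re-derive exactly the slice of it that is needed: via Lemma~\ref{lemma:link_wmin_operatorT} and Proposition~\ref{prop:algo_wmin} you set up the dictionary $\alpha_{n+1}\equiv -p\,x_n \pmod q$, and since $\gcd(p,q)=1$ the map $r\mapsto(-pr\bmod q)$ is indeed a bijection, so non-termination is equivalent to $\wmin_{p/q}(u)$ being written entirely over $S'=\phi(S)$; this is precisely the $l=1$ case of the translation carried out inside the proof of Theorem~\ref{th:conj_equivalence}, and it matches the reformulation the paper records in Remark~\ref{rk:DM09_conjecture} (all letters of $\{0,\dots,q-1\}$ appear in every minimal word). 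What your version buys: it makes explicit that only a single-letter occurrence --- far weaker than normality --- is needed, paralleling the proof of Proposition~\ref{prop:impliesAkiyama}, and it flags the degenerate case $S=\{0,\dots,q-1\}$, which the paper's statement and proof leave implicit (there the map never emits STOP, so the question must be read with $S$ a proper subset, as you correctly note). What the paper's route buys is brevity and the conceptual point that the $4/3$-problem is subsumed by the equidistribution formulation; your argument trades that for independence from Theorem~\ref{th:conj_equivalence}, which is a legitimate and arguably more transparent choice.
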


\begin{proof}It is clear that the truth of our Conjecture~\ref{conj:equidistribution} (which states that every nonzero sequence of iterates for the operator $T_{p/q}=\lceil \frac{p}{q} \, \bm{\cdot} \rceil$ is equidistributed in the  residue classes modulo $q^l$ for every $l\geq 1$) implies that the answer to Question~\ref{question:DM09} is `yes'. Conjectures \ref{conj:normality} and \ref{conj:equidistribution} being equivalent by Theorem \ref{th:conj_equivalence}, the result follows.
\end{proof}

\begin{remark}\label{rk:DM09_conjecture} From a combinatorics on words perspective, Question~\ref{question:DM09} is equivalent to asking if all letters in $\{0,\dots,q-1\}$ appear in every minimal word in rational base $p/q$. 
\end{remark}

\paragraph{Collatz conjecture.}

Let us recall its statement.

\begin{conjecture}\label{conj:collatz}
	For every positive integer $x$, the sequence of iterates of the operator
	\[F : x \mapsto \begin{cases}
	\frac{3x+1}{2}, \text{ if $x$ is odd, }\\
	\frac{x}{2}, \text{ otherwise,}
	\end{cases}\]
	is eventually periodic with period $(1,2)$.
\end{conjecture}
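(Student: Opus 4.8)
The plan cannot honestly be a genuine proof: the Collatz conjecture is one of the most notorious open problems in mathematics, and no approach---including the one developed here---is known to settle it. What I can usefully describe is the precise bridge to the rational base framework that the surrounding discussion presumably exploits, together with an explanation of why the equidistribution machinery of Conjecture~\ref{conj:equidistribution} falls short of a resolution.

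First I would make the connection to base $3/2$ explicit. Writing an odd integer as $x=2k+1$, the Collatz step gives $F(x)=(3(2k+1)+1)/2=3k+2$, while $T_{3/2}(x)=\lceil\frac{3}{2}(2k+1)\rceil=\lceil 3k+\tfrac32\rceil=3k+2$; thus $F$ and $T_{3/2}$ coincide on odd integers. On even integers the two maps diverge completely: $F(2k)=k$ halves, whereas $T_{3/2}(2k)=3k$ grows. This is the structural reason why the ``$4/3$-problem'' of Question~\ref{question:DM09}, rather than the full Collatz map, is the natural object inside the formalism: the Dubickas--Mossinghoff map \emph{stops} on residues outside the prescribed set $S$ instead of halving, which matches the conditional, strictly increasing character of $T_{p/q}$ iteration. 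Accordingly, a base $3/2$ translation turns the Collatz dynamics into a map that alternates between the arithmetic-progression growth captured by our minimal words and an unrelated contraction step that has no analogue in $T_{3/2}$.

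The decisive obstacle is that Conjecture~\ref{conj:equidistribution} is a statement about the \emph{distribution} of the residues $(T_{p/q}^l(n)\bmod q^k)_l$ along a single, necessarily unbounded orbit, whereas Collatz is a \emph{global convergence} statement: every orbit must reach the cycle $(1,2)$, which requires simultaneously ruling out nontrivial cycles \emph{and} divergent trajectories. Equidistribution is silent about whether an orbit ever returns to a bounded region---indeed the $T_{3/2}$-orbits underlying our minimal words are genuinely unbounded---so the two phenomena are of a different nature. I would therefore expect any proof attempt to reduce, after the base $3/2$ translation, to exactly the two hard cores of Collatz, absence of other cycles and absence of divergence, neither of which is reachable through normality or equidistribution of minimal words. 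The most I would claim, and presumably all the surrounding discussion asserts, is the analogy already signalled in the abstract: our conjecture settles the weaker, termination-type variant of Question~\ref{question:DM09} (via Proposition~\ref{prop:implies43problem}) without touching the convergence heart of Collatz itself.
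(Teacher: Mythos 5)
Your proposal is correct in the only sense possible: the statement is the Collatz conjecture itself, which the paper states without proof and explicitly notes it has no formal implication for, and your refusal to ``prove'' it---together with the observation that $F$ and $T_{3/2}$ agree on odd integers but diverge on even ones, and that Conjecture~\ref{conj:equidistribution} only yields the termination-type Question~\ref{question:DM09} via Proposition~\ref{prop:implies43problem}---matches the paper's own discussion exactly. Nothing to correct.
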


The possibility of a link between minimal and maximal words in rational base $3/2$ (which we recall, first emerged in 1968 in the context of Mahler's conjecture) and the Collatz conjecture
has intrigued several authors throughout time (see, for example, \cite{Lag85},  \cite{DM09}, \cite{Dub09},\cite{Ais14}, and \cite{EVG25}). This intuition  relies on the \emph{similarity} between the Collatz operator $F$ and the operator $T_{3/2}$, which can be rewritten as
\[T_{3/2} : x \mapsto \begin{cases}
\frac{3x+1}{2}, \text{ if $x$ is odd, }\\
\frac{3x}{2}, \text{ otherwise.}
\end{cases}\]

It is interesting to reduce modulo $2$ the trajectories of positive integers $x$ under iterations of the Collatz map $F$, and compare the combinatorial properties of the infinite binary words thus obtained, which we will call \emph{Collatz words}, with those of minimal/maximal words in rational base $3/2$. Along this line, Dubickas established the same lower bound for the complexity of minimal words and that of Collatz words encoding divergent trajectories, if they exist \cite[Corollary~4 and Theorem~5]{Dub09}. More recently, Eliahou and Verger-Gaugry simultaneously studied the distribution of letters in Collatz words, and in the maximal word with seed the empty word \cite[Conjectures~5 and 18]{EVG25}. 

However, beyond these similarities, we are not aware, as of this writing, of any implication or, more generally, of any formal comparison regarding the relative difficulty between our Conjectures~\ref{conj:normality} and \ref{conj:equidistribution} and the Collatz conjecture.

%%%%%%%%%%%%%%%%%%%%
%%%%%%%%%%%%%%%%%%%%
\section{Numerical evidence supporting Conjecture~\ref{conj:normality}}\label{sect:numerical_evidence}
%%%%%%%%%%%%%%%%%%%%

Minimal words have been the subject of numerous numerical experiments, notably by Mahler \cite{Mah68}, Flatto \cite{Fla92}, and Dubickas--Mossinghoff \cite{DM09}; however, they were looking for occurrences of specific subwords, and not investigating the presence and the distribution of all finite words.

\medskip

We recall that, due to Corollary \ref{cor:conj1_redundant}, it is sufficient to test Conjecture \ref{conj:normality} on minimal words only.

\subsection{Methodology}

\paragraph{Overview of the experiments and reproducibility.}

We conducted two series of experiments: one examining the
\emph{richness threshold} of minimal words in rational base $p/q$, and the other measuring how
much the distribution of their subwords deviates from uniformity. These  experiments cover three families of minimal words---representing over 40,000 words in total---which were carefully chosen to avoid exhibiting particular behaviors. The computations were performed using Python 3 and required several days on a standard laptop. The most time-consuming step is the computation of minimal words, due to the manipulation of large integers generated by iterations of the operator $T_{p/q}$. The public git repository \cite{Git26} contains both the functions and the datasets we used, together with complete tables and figures that could not be included in the paper.

\paragraph{Experiment 1: Computing the richness threshold of minimal words.}

 An infinite word $w \in \{0,\dots,q-1\}^{\N}$ is said to be \emph{rich} if its complexity is $\mathsf{p}_w(l)=q^l$ for all $l\geq 1$, that is, if it contains all the $q^l$ finite words of length $l$ as subwords (see \cite{Bug12}, p.91). Clearly, being rich is a prerequisite for being normal.
 
The \emph{richness threshold} of an infinite word $w$ is:
\[ \begin{array}{rccl} \rt_w:&\N&\longrightarrow&\N\cup\{\infty\}\\
        & l&\longmapsto& \inf\{L\in\N \text{ s.t. all words of length $l$ appear in $\pref_L(w)$}\}.
    \end{array}
\]
A word $w$ is rich if and only if its richness threshold function always takes finite values.

\begin{example}\label{ex:richness-threshold}
	The richness threshold of an infinite binary word beginning with
	\[w = 00010011001011101...\]
	takes the values $4, 8$ and $15$ for $l=1,2,$ and $3$, respectively.
\end{example}

In our first series of experiments, we compute the richness thresholds of minimal words for increasing $l$. We compare them with:
\begin{itemize}
    \vspace{-0.2cm}\item[-] the richness threshold of the expansions, in integer base $q$, of $\pi$ and $\sqrt{2}$ (which are widely conjectured to be normal $q$-ary words since the work of \cite{Bor50}),
    \vspace{-0.2cm}\item[-] the richness threshold of numerous \emph{random $q$-ary words} (i.e., an infinite word in which the letters are independently and uniformly drawn from $\{0,\dots, q-1\}$); it is well known that almost all such words are normal,
    \vspace{-0.2cm}\item[-] the quantity $q^l\log(q^l)$, which is the asymptotic value (as $l\rightarrow \infty$) of the expected richness threshold of random $q$-ary words \cite{mor87}.
\end{itemize}
We choose to compute the richness threshold of minimal words for the following reasons:
	\begin{itemize}
	 \vspace{-0.2cm} \item[-] Having finite richness thresholds  is a necessary condition for being normal.
	 \vspace{-0.2cm} 	 
	 \item[-] Furthermore, a low richness threshold suggests that all factors of smaller lengths appear reasonably often, which supports normality.
	 \vspace{-0.2cm} \item[-] The first values of the richness threshold of an infinite word $w$ can be computed definitively from a sufficiently long prefix of $w$, as illustrated in Example~\ref{ex:richness-threshold}.	 
\end{itemize}

\paragraph{Experiment 2: Measuring the deviation from uniformity.}

In our second series of experiments, we measure how much the distribution of subwords of length $l$ in growing prefixes of minimal words deviates from a uniform distribution. 
Specifically, for an infinite word $w \in \{0,\dots,q-1\}^{\N}$, and $l \in\N$, we define the \emph{length l deviation from uniformity} of $w$ by
\begin{equation}\label{eq:def_dfu}
    D_{w,l}(n):=\underset{v\in\{0,\dots,q-1\}^l}{\max}\,\Big|\frac{|\pref_n(w)|_v}{n-l+1}-\frac{1}{q^l}\Big|
\end{equation}
where $|\pref_n(w)|_v$ denotes the number of occurrences of $v$ in the prefix of length $n$ of $w$ (that is, the number of times $v$ appears when sliding a window of length $l$ along the  first $n$ letters of $w$). The intermediate quantity 
\[ \frac{|\pref_n(w)|_v}{n-l+1}\]
can thus be understood as the empirical frequency of the subword $v$ in $w$. 

Clearly, $w$  is normal if, and only if, for every $l \in \N$, we have $\lim_{n\rightarrow \infty} D_{w,l}(n) = 0$. In this case, the deviation from uniformity coincides with the notion of `discrepancy', as defined in \cite{Sch86}.

In our second series of experiments, we compute the length $l$ deviations from uniformity of minimal words, and compare them with those of numerous random $q$-ary words.

\paragraph{Computed minimal words.} Ideally, one would like to carry out the experiments on prefixes that are as long as possible, for as many minimal words as possible. However, due to time constraints, a compromise must be made between the length $n$ of the computed prefixes, the number of rational bases $p/q$ considered, and the number of seed words that we investigate. 
We thus choose to work with three families of minimal words. 
\begin{enumerate}
\item In the first family, the parameters $p$ and $q$ vary, while the seed word $u$ is fixed. More precisely, we studied all the words $\wmin_{p/q}(u)$ for $u=\rep_{p/q}(1) =q$, and $1<q<p<10$, where $p$ and $q$ are coprime. We computed the first one million letters of these 19 words.

\item In the second family, we focus on the bases $p/q = 3/2$, $7/2$, $8/3$, $11/3$, $8/5$, and $26/5$. We examine the minimal words generated by the following sets of twenty randomly chosen seed words whose valuations lie in $\{1,\ldots,1000\}$: \small
\[
    \left\{\begin{array}{c} \text{valuations of chosen} \\ \text{seed words for base $3/2$}\end{array}\right\} \;=\; \left\{\begin{array}{c} 97, 135, 159, 218, 224, 243, 258, 276, 382, 433, \\ 570, 604, 650, 670, 684, 771, 845, 870, 972, 990 \end{array}\right\}
\]

\textcolor{white}{.}
\vspace{-0.7cm}
\[
    \left\{\begin{array}{c} \text{valuations of chosen} \\ \text{seed words for base $7/2$}\end{array}\right\} \;=\; \left\{\begin{array}{c} 26, 115, 167, 190, 223, 243, 250, 255, 271, 294 \\ 316, 394, 408, 592, 763, 802, 804, 830, 885, 943 \end{array}\right\}
\]

\vspace{-0.5cm}
\[
    \left\{\begin{array}{c} \text{valuations of chosen} \\ \text{seed words for base $8/3$}\end{array}\right\} \;=\; \left\{\begin{array}{c} 33,108,188,336,342,458,470,579,596,631, \\ 641,670,767,785,805,849,883,916,958,1000 \end{array}\right\}
\]

\vspace{-0.5cm}
\[
\left\{\begin{array}{c} \text{valuations of chosen} \\ \text{seed words for base $11/3$}\end{array}\right\} \;=\; \left\{\begin{array}{c}  101, 201, 283, 289, 308, 310, 367, 409, 429, 439, \\ 760, 812, 817, 846, 891, 925, 929, 934, 939, 987 \end{array}\right\}
\]

\vspace{-0.5cm}
\[
    \left\{\begin{array}{c} \text{valuations of chosen} \\ \text{seed words for base $8/5$}\end{array}\right\} \;=\; \left\{\begin{array}{c} 61, 111, 116, 414, 432, 455, 477, 551, 592, 664 \\ 711, 749, 772, 791, 835, 856, 878, 945, 961, 965 \end{array}\right\}
\]

\vspace{-0.5cm}
\[
\left\{\begin{array}{c} \text{valuations of chosen} \\ \text{seed words for base $26/5$}\end{array}\right\} \;=\; \left\{\begin{array}{c}52, 121, 134, 186, 239, 248, 294, 390, 453, 505, \\ 519, 600, 670, 671, 795, 816, 878, 916, 917, 962\end{array}\right\}
\]

\normalsize
Again, we computed the first one million letters of these 120 minimal words.

\item In the third and final family, we focus on the four bases $p/q = 7/2$, $5/3$, $11/3$ and $6/5$, but study minimal words generated from a much larger number of different seed words. More precisely, we randomly selected 40,000 seed words whose valuations lie in $\{1,\dots,2^{50}\}$.  As a compromise, we computed the first 100,000 letters (instead of one million) of the corresponding minimal words. The list of the 40,000 random seeds is provided in the git repository \cite{Git26}.
\end{enumerate}
In total, we ran our two experiments on the first one million letters of $139$ minimal words, and on the first 100,000 letters of an additional 40,000 minimal words.

Finally, note that the bases $p/q$ studied in the second and third families of experiments have been chosen to include parameters $(p,q)$ such that $q(q-1)<p<q^2$, corresponding to a case not explored in \cite{DM09}, and   $p>q^2$, for which the $Z_{p/q}$-numbers conjecture does not hold (see Section \ref{sssect:implies_mahler}).

\paragraph{Generation of the data sets}

The random seed numbers used in Experiments 2 and 3 were generated using the Python \texttt{random.randint} function, which is based on the Mersenne Twister (MT19937) pseudorandom number generator. It is important to ensure that the seed is chosen from a sufficiently large set relatively to the sample size. Indeed, we wish to reduce the probability of choosing two seeds that lie on the same branch in the tree $\tree_{p/q}$ and would generate essentially the same minimal word.

The minimal words are then computed from the  seed numbers following Remark~\ref{rk:how_to_compute_min_words} (which itself stems from the MD algorithm). The procedure is described by Algorithm~\ref{alg:minword}. 

\begin{algorithm}[H]\label{alg:minword}
	\caption{Computation of the first $l$ letters of $\wmin_{p/q}(n)$}
	$nmin \gets$ $n$\;
	$wmin \gets \text{the empty word}$\;
	\For{$\text{cpt} \gets 1$ \KwTo $l$}{
		compute $loc, a$ as quotient and remainder of $-p \times nmin$ divided by $q$\;
		$nmin \gets -loc$\;
		$wmin \gets wmin \cdot a$; \qquad \qquad \# where $\cdot$ denotes the concatenation \\
	}
	\Return $wmin$\;
\end{algorithm}

The computation of minimal words is the bottleneck of our experiments, due to the manipulation of large integers. More precisely, computing the length-$l$ prefix of $\wmin_{p/q}(n)$ requires handling integers as large as $n(p/q)^l$. Consequently, computing the first one million letters of $\wmin_{9/2}(1)$ in the first family took $8$ minutes, while computing the $10{,}000$  minimal words in base $11/3$ in the third family took approximately $6$ hours on a standard laptop.

%%%%%%%%%%%%%%%%%%%%%%%%%%%%%%%%%%%%

\subsection{Results for the richness threshold}

First, we display the richness thresholds of all binary minimal words in our first family.
 The results are gathered in Table~1 (which is divided into two parts due to space constraints).
 This table is to be read as follows: at the intersection of the row representing the word $w$ and the column representing the length $l$:
\begin{itemize}
\vspace{-0.1cm}\item[-] if the corresponding entry is positive, it is the richness threshold $\rt_w(l)$;
\vspace{-0.1cm}\item[-] if the entry is negative, its absolute value indicates how many words of length $l$ are missing in the prefix of length $10^6$ of $w$.
\end{itemize}
\vspace{-0.1cm}The seven penultimate rows describe the distribution (minimum, $5$th, $25$th, $50$th, $75$th and $95$th percentile, maximum) of the richness threshold for a family of 1,000 random binary words.
The last row gives the (asymptotic) expected value of the richness threshold for a  random binary word, namely  $2^l\log(2^l)$.

\bigskip

%%%%%%%%%%%%%%%%%%%%%%%%%%%%%%%%%%%%%
%%% Tables de la première famille %%%
%%%%%%%%%%%%%%%%%%%%%%%%%%%%%%%%%%%%%

\begin{center}
	{\small
		\begin{tabular}{|c||c|c|c|c|c|c|c|c|c|c|}
			\hline
			$l$ & 1 & 2 & 3 & 4 & 5 & 6 & 7 & 8 & 9 & 10  \\
			\hline \hline
			$\wmin_{3/2}(\mathtt{2})$ & 2 & 6 & \textbf{51*} & 54 & 123 & 358 & 787 & 1479 & 2643 & 7272  \\
			\hline
			$\wmin_{5/2}(\mathtt{2})$ & 3 & 6 & \textbf{11*} & 52 & 221 & 228 & 661 & \textbf{992*} & 2589 & 6507  \\
			\hline
			$\wmin_{7/2}(\mathtt{2})$ & 2 & 8 & 34 & 86 & 115 & 201 & 905 & 1126 & 3160 & \textbf{5725*}  \\
			\hline
			$\wmin_{9/2}(\mathtt{2})$ & 4 & 7 & 29 & 42 & 128 & \textbf{188*} & 626 & \textbf{2365*} & \textbf{5589*} & 6548  \\
			\hline
			$\rep_2(\sqrt{2})$ & 2 & 10 & 19 & \textbf{22*} & 133 & 459 & 517 & 1806 & 3259 & 7185 \\
			\hline
			$\rep_2(\pi)$ & 3 & 5 & 20 & \textbf{25*} & 102 & 400 & 540 & 1351 & 3790 & 8034  \\
			\hline
			randwords min & 2 & 5 & 10 & 20 & 48 & 132 & 376 & 799 & 2164 & 5167 \\
			\hline
			5th centile & 2 & 5 & 12 & 30 & 75 & 192 & 456 & 1126 & 2574 & 5960 \\
			\hline
			25th centile & 2 & 6 & 17 & 41 & 100 & 241 & 568 & 1334 & 2937 & 6674\\
			\hline
			50th centile & 2 & 8 & 22 & 53 & 128 & 293 & 663 & 1529 & 3340 & 7510 \\
			\hline
			75th centile & 3 & 12 & 29 & 71 & 162 & 364 & 793 & 1792 & 3853 & 8411 \\
			\hline
			95th centile & 5 & 18 & 46 & 114 & 241 & 510 & 1108 & 2305 & 4806 & 10282 \\
			\hline
			randwords max & 13 & 34 & 104 & 193 & 454 & 1506 & 2363 & 4509 & 8677 & 18807 \\
			\hline
			$\lfloor 2^l\log(2^l)\rfloor$ & 1 & 5 & 16 & 44 & 110 & 266 & 621 & 1419 & 3194 & 7097  \\
			\hline
		\end{tabular}
		
		\vspace{0.2cm}
		\begin{tabular}{|c||c|c|c|c|c|c|c|}
			\hline
			$l$&  11 & 12 & 13 & 14 & 15 & 16 & 17 \\
			\hline \hline
			$\wmin_{3/2}(\mathtt{2})$ &18200& 39358 & 65137 & 154725 & 390091 & 821322 & $-63$ \\
			\hline
			$\wmin_{5/2}(\mathtt{2})$ &16605 & 31442 & 71030 & 189740 & 309169 & 827260 & $-64$ \\
			\hline
			$\wmin_{7/2}(\mathtt{2})$ & 21722& 41938 & 77728 & 208773 & 384796 & 894414 & $-60$ \\
			\hline
			$\wmin_{9/2}(\mathtt{2})$ & \textbf{23435*} &32075 & 81088 & 190265 & 358020 & 914320 & $-61$ \\
			\hline
			$\rep_2(\sqrt{2})$ & 18928 & 32231 & 83298 & 166437 & 396117 & 847032 & $-53$ \\
			\hline
			$\rep_2(\pi)$ & 17225 & 35851 & 71909 & 160119 & 405148 & 824328 & $-63$ \\
			\hline
			randwords min & 12121& 24725 & 56668 & 126747 & 264481 & 595077 & $-39$\\
			\hline
			5th centile & 13216& 29425 & 63624 & 139672 & 304475 & 651828 & $-51$\\
			\hline
			25th centile & 15056& 32737 & 71045 & 152830 & 331329 & 702967 & $-58$\\
			\hline
			50th centile & 16566& 35726 & 76342 & 163915 & 353742 & 746354 & $-63$\\
			\hline
			75th centile & 18438 & 38913 & 82981 & 178882 & 384375 & 808695 & $-69$\\
			\hline
			95th centile & 22111& 46157 & 97283 & 210337 & 445093 & 915304 & $-77$\\
			\hline
			randwords max & 35662& 58696 & 151144 & 296020 & 817548 & $-1$ & $-93$\\
			\hline
			$\lfloor 2^l\log(2^l)\rfloor$ &15615 & 34069 & 73817 & 158991 & 340695 & 726817 & 1544487\\
			\hline
	\end{tabular}}
	\normalsize
	
	\medskip
	\noindent\textbf{Table 1:} Richness thresholds for $q=2$ (due to space constraints, the table is divided into two parts).
\end{center}

%%%%%%%%%%%%%%%%%%%%
%%   FIN TABLE PREMIERE FAMILLE
%%%%%%%%%%%%%%%%%%%%

\paragraph{Analysis of Table~1.}

First, we observe that all binary subwords of length $16$ appear in the prefix of length 1,000,000 of the four minimal words considered. These prefixes miss between $60$ and $64$ subwords of length $17$. By comparison, the binary expansions of $\pi$ and $\sqrt{2}$ miss $53$ and $63$ subwords of length $17$, respectively, while $50\%$ of our dataset of random binary words of length 1,000,000 miss between $58$ and $69$ subwords of length $17$. Therefore, as far as we can tell, our measures support the hypothesis that these minimal words are rich.

It is interesting to compare more closely the richness thresholds of our minimal words with those of our dataset of random words. In this direction, we observe that all 64 computed richness thresholds for minimal words lie between the extremal values taken by the 1,000 random words. Among these 64 values, four of them take a value smaller than that of $95\%$ of random words, and four others take a value larger than that of $95\%$ of random words. These values are highlighted in the table.  Among them, we checked that the entry $\rt_{\wmin_{5/2}}(8)=992$ is the one that differs the most from the statistical richness thresholds of random words: only $0.7\%$ of the random words we considered have a smaller richness threshold. However, this ``anomaly'' disappears for $l \geq 8$, as far as we can measure. %In particular, $\rt_{\wmin_{5/2}}(14)$ lies between the $75$th and $95$th percentiles of the richness thresholds of random binary words, while $\rt_{\wmin_{5/2}}(15)$ lies between the $5$th and $25$th percentiles.
 Similarly, the three values $\rt_{\wmin_{9/2}}(8)$, $\rt_{\wmin_{9/2}}(9)$, and $\rt_{\wmin_{9/2}}(11)$ are larger than the $95$th percentile, but the intermediate and subsequent values $\rt_{\wmin_{9/2}}(10)$ and $\rt_{\wmin_{9/2}}(13)$ lie below the $25$th percentile.
It is a general observation: as far as we can see in Table~1, the richness threshold function of the minimal words considered does not appear to differ substantially from that of random binary words. This could support Conjecture~\ref{conj:normality}.

\bigskip

For the sake of readability, we omit the tables for the remaining 15 minimal words in our first family. These tables are available in the public Git repository~\cite{Git26}. Their analysis leads to conclusions similar to those drawn from Table~1.

%%%%%%%%%%%%%%%%%%%%%%%%%%%%%%%%%%%%%%%%%
%%% Fin ANALYSE de la première famille %%%
%%%%%%%%%%%%%%%%%%%%%%%%%%%%%%%%%%%%%%%%%

\bigskip

In Table~2, we present the richness thresholds of the minimal words in our second family (that is, twenty randomly chosen minimal words, computed up to length $10^6$) in the case of base $8/3$. For the sake of readability, the results for bases $3/2$, $7/2$, and $8/5$ are omitted, but provided in the git repository.

%%%%%%%%%%%%%%%%%%%%%
% DEBUT TABLE 2
%%%%%%%%%%%%%%%%%%%%%

\begin{center}
	{\small
		\begin{tabular}{|c||c|c|c|c|c|c|c|c|c|c|c|}
			\hline
			$l$ & 3 & 4 & 5 & 6 & 7 & 8 & 9 & 10 & 11 \\
			\hline \hline
			$\wmin_{8/3}(u_1)$ & 101 & 324 & 1467 & 5186 & 15357 & 61842 & 196505 & 794699 & $-638$\\
			\hline
			$\wmin_{8/3}(u_2)$  & 131 & 445 & 1513 & 5865 & \textbf{13673*} & 60611 & 201995 & 586281 & $-660$\\
			\hline
			$\wmin_{8/3}(u_3)$  & 79 & 376 & 1890 & 4379 & \textbf{30884*} & 62889 & 207741 & 675919 & $-608$\\
			\hline
			$\wmin_{8/3}(u_4)$  & 125 & 459 & 2036 & 5461 & 16702 & 55581 & 196247 & 646705 & $-642$\\
			\hline
			$\wmin_{8/3}(u_5)$  & \textbf{219*} & 551 & 1734 & 5119 & 20158 & 69743 & 192828 & 699757 & $-608$\\
			\hline
			$\wmin_{8/3}(u_6)$  & 112 & 368 & 1420 & 4054 & 21026 & 67403 & \textbf{170275*} & 706085 & $-628$\\
			\hline
			$\wmin_{8/3}(u_7)$  & 67 & 296 & 1332 & 6259 & 16860 & 68380 & 196740 & 650223 & $-608$\\
			\hline
			$\wmin_{8/3}(u_8)$  & 102 & 302 & 1435 & 5665 & 14792 & 64001 & \textbf{261771*}& 630669 & $-595$\\
			\hline
			$\wmin_{8/3}(u_9)$  & 64 & 339 & 1684 & 5916 & 15667 & 61957 & 186713 & 604557 & $-634$\\
			\hline
			$\wmin_{8/3}(u_{10})$  & \textbf{220*} & 399 & 1403 & 4792 & 19613 & 67004 & \textbf{163180*} & 655187 & $\mathbf{-681}\textbf{*}$\\
			\hline
			$\wmin_{8/3}(u_{11})$  & 118 & 551 & 1376 & 5161 & 19444 & 65987 & \textbf{165643*} & 670311 & $-613$\\
			\hline
			$\wmin_{8/3}(u_{12})$  & 98 & 327 & 1145 & \textbf{7172*} & 18647 & 63339 & 186010 & 589654 & $-632$\\
			\hline
			$\wmin_{8/3}(u_{13})$  & \textbf{46*} & 264 & 1320 & 6031 & 16866 & 55016 & 208499 & \textbf{871809*} & $-660$\\
			\hline
			$\wmin_{8/3}(u_{14})$  & 103 & 407 & 1657 & 5582 & 19505 & 55349 & 291635 & 720286 & $-665$\\
			\hline
			$\wmin_{8/3}(u_{15})$ & 151 & 408 & 1441 & \textbf{3791*} & 16716 & 61525 & 208025 & 642467 & $-624$\\
			\hline
			$\wmin_{8/3}(u_{16})$ & 82 & 258 & 1085 & 5424 & 20403 & \textbf{76416*} & 232390 & 632903 & $-642$\\
			\hline
			$\wmin_{8/3}(u_{17})$  & 135 & \textbf{944*} & 1811 & 5536 & \textbf{13357*} & 58023 & 222863 & \textbf{885517*} & $-629$\\
			\hline
			$\wmin_{8/3}(u_{18})$  & 112 & 405 & 1607 & 4943 & 18437 & 61846 & 228336 & 750062 & $-604$\\
			\hline
			$\wmin_{8/3}(u_{19})$  & 91 & 535 & 1223 & 4607 & \textbf{23991*} & \textbf{49271*} & 215391 & 799419 & $-608$\\
			\hline
			$\wmin_{8/3}(u_{20})$  & 121 & \textbf{663*} & 1944 & 4639 & 18637 & 61147 & 243133 & 608729 & $-588$\\
			\hline
			$\rep_3(\sqrt{2})$  & 66 & 377 & 1290 & \textbf{7404*} & 16511 & 56260 & 211187 & 790264 & $-629$ \\
			\hline
			$\rep_3(\pi)$  & 119 & 348 & 1978 & 6379 & 15779 & \textbf{79122*} & 183178 & 584098 & $-647$ \\
			\hline
			rand min  & 45 & 175 & 892 & 3310 & 12777 & 43294 & 153480 & 522425 & $-530$\\
			\hline
			5th centile  & 61 & 269 & 1063 & 3991 & 14329 & 50780 & 173011 & 581477 & $-583$\\
			\hline
			25th centile  & 81 & 327 & 1265 & 4579 & 16119 & 55641 & 188647 & 631602 & $-610$\\
			\hline
			50th centile  & 102 & 391 & 1444 & 5126 & 17606 & 60139 & 202943 & 673103 & $-626$\\
			\hline
			75th centile & 129 & 462 & 1675 & 5791 & 19581 & 65387 & 219601 & 729835 & $-642$\\
			\hline
			95th centile  & 185 & 608 & 2066 & 6940 & 23656 & 76001 & 257283 & 828424 & $-667$\\
			\hline
			rand max  & 293 & 1006 & 3546 & 11360 & 31557 & 115515 & 325881 & $-1$ & $-717$\\
			\hline
			$\lfloor 3^l\log(3^l)\rfloor$ & 88 & 355 & 1334 & 4805 & 16818 & 57663 & 194615 & 648719 & \tiny{2140774} \\
			\hline
	\end{tabular}}
	\normalsize
	
	\medskip
	\noindent\textbf{Table 2:} Richness thresholds in base $p/q=8/3$ for 20 randomly chosen seed $n\in\{1,2,\ldots,1000\}$. The columns $l=1,2$ have been removed due to space constraint.
\end{center}

%%%%%%%%%%%%%%%%%%%%%%%%
%  FIN TABLE 2
%%%%%%%%%%%%%%%%%%%%%%%%

\paragraph{Analysis of Table~2.} As in Table~1, the richness thresholds of the 20 random minimal words in rational base $8/3$ appear to take finite values, as far as we can measure, thus supporting that our richness hypothesis might be valid for every minimal words, regardless of the seed. Moreover, these values again appear to lie within the range of those taken by our dataset of 1,000 random ternary words of length 1,000,000. Minor isolated deviations appear (anomalous data are highlighted), but there is no evidence of behavior substantially different from that of random words.

\bigskip

Finally, we present the richness thresholds of our third (large) family of minimal words in graphical form for the four rational bases considered.

\bigskip

\noindent
\begin{minipage}[c]{.5\linewidth}
    \includegraphics[scale=0.24]{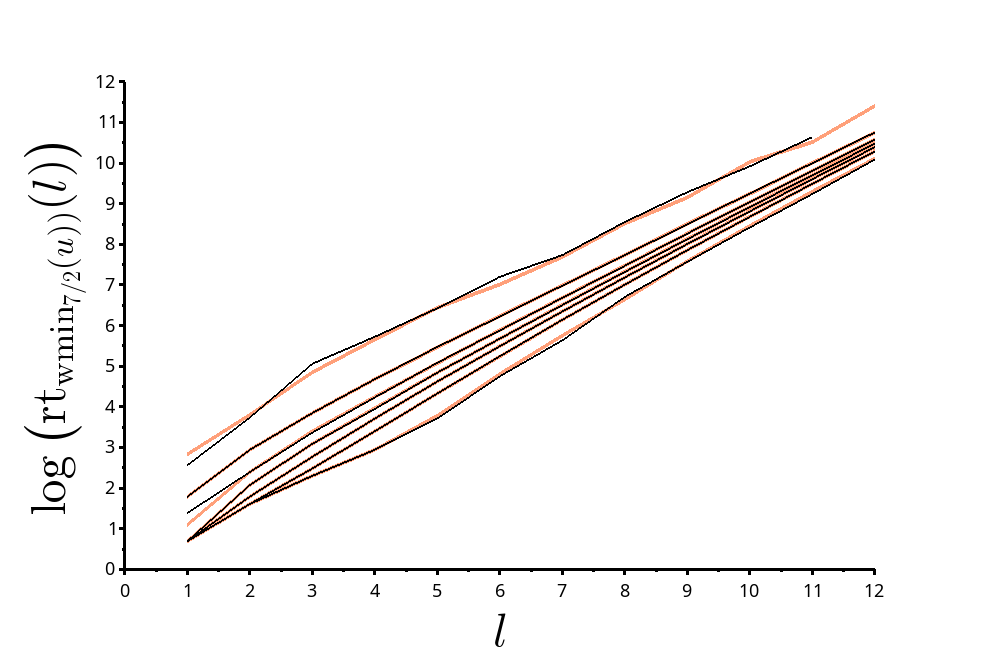}
\end{minipage}
\hfill
\begin{minipage}[c]{.5\linewidth}
    \includegraphics[scale=0.24]{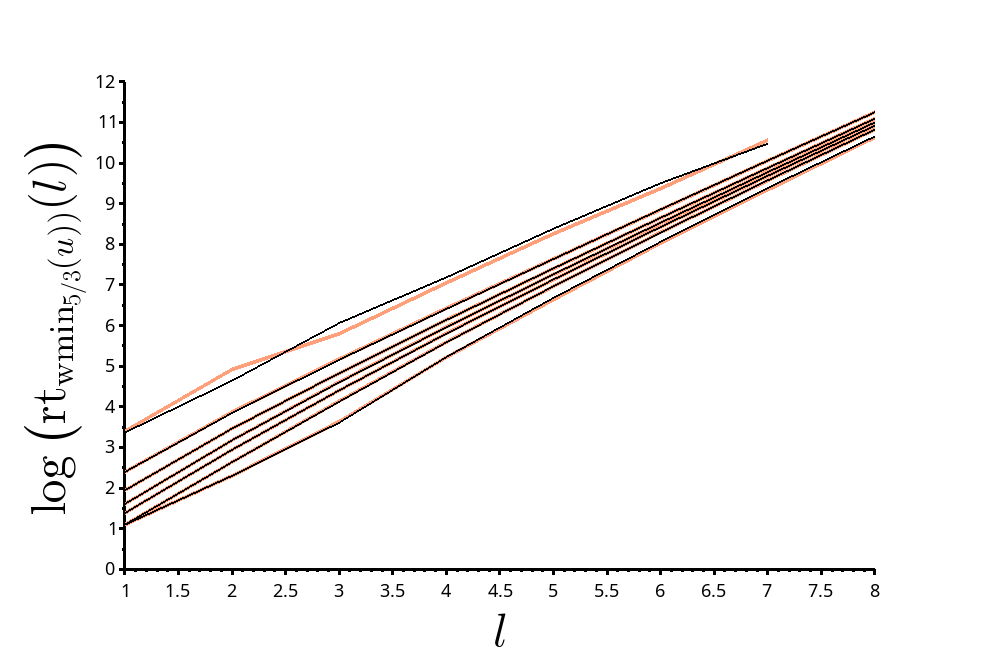}
\end{minipage}

\noindent
\begin{minipage}[c]{.5\linewidth}
    \includegraphics[scale=0.24]{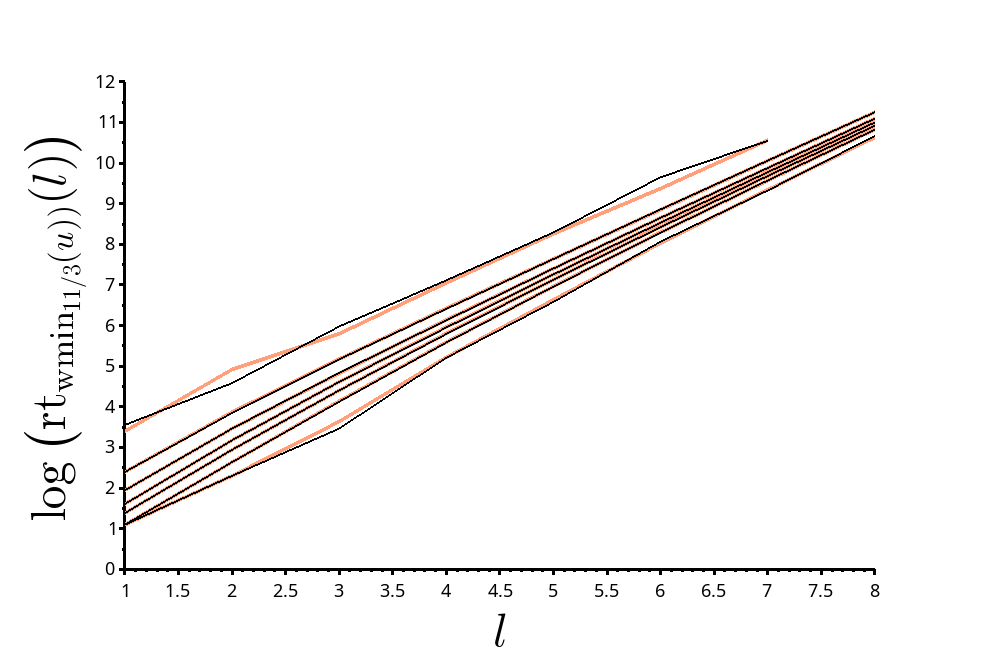}
\end{minipage}
\hfill
\begin{minipage}[c]{.5\linewidth}
    \includegraphics[scale=0.24]{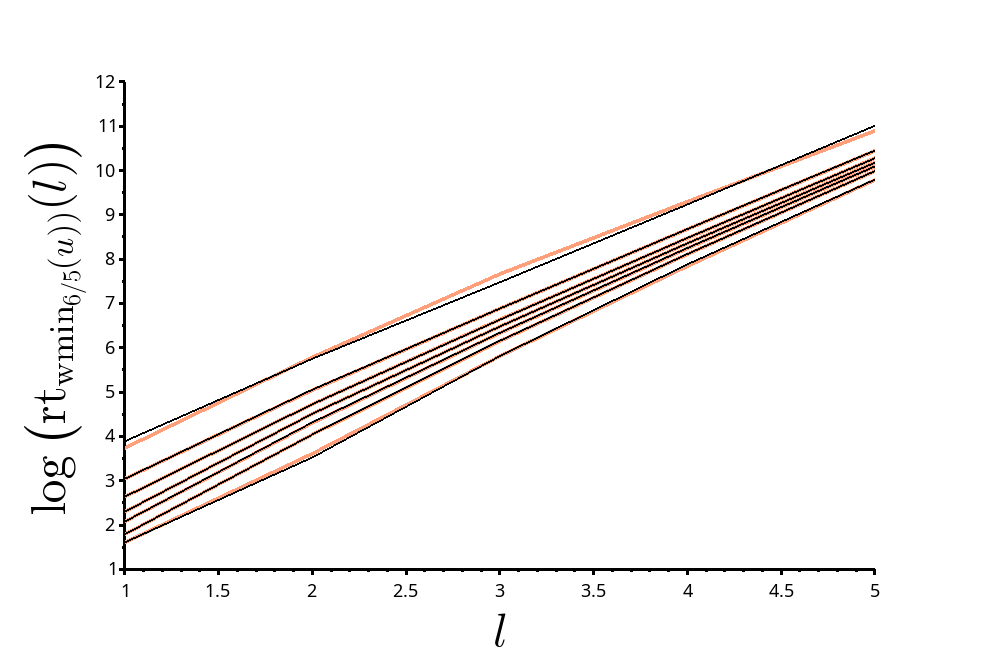}
\end{minipage}

\begin{center}
\noindent\textbf{Figure 2:} Richness thresholds for $10,000$ randomly chosen minimal words in base $7/2$ (top left), $5/3$ (top right), $11/3$ (bottom left), and $6/5$ (bottom right).
\end{center}

Each panel of Figure~2 compares the statistical properties of the richness thresholds of 10,000 randomly chosen minimal words (in black) with those of 10,000 random $q$-ary words (in red or gray). All the words were computed up to length 100,000. From bottom to top, we display the minimum, the 5th, 25th, 50th, 75th, and 95th percentiles, and the maximum of these two sets of words.

\paragraph{Analysis of Figure~2} As far as we can see, each of the 40,000 richness threshold functions of the minimal words in our dataset takes finite values, except sometimes for the last value of $l$ due to the limited length of the computed prefix. This supports our hypothesis that all minimal words are rich. Moreover, the growth rate of the richness threshold functions of minimal words appears similar to that of random words, and the distribution of the values taken by the richness threshold appears, as far as our division into percentiles allows us to observe, similar to that of random words.

To confirm this latter observation, we performed eight ``cut views'' (two for each base). Figure~3 compares the cut views for $l=5$ and $l=10$, that is, the distributions of $\rt_w(5)$ and $\rt_w(10)$ among minimal words in base $7/2$ in our dataset with those of random binary words. In both cases, the distributions of the richness thresholds for minimal and random words are very similar and exhibit the same Poisson-like shape.

\bigskip

\noindent \begin{minipage}[c]{.5\linewidth}
	\includegraphics[scale=0.41]{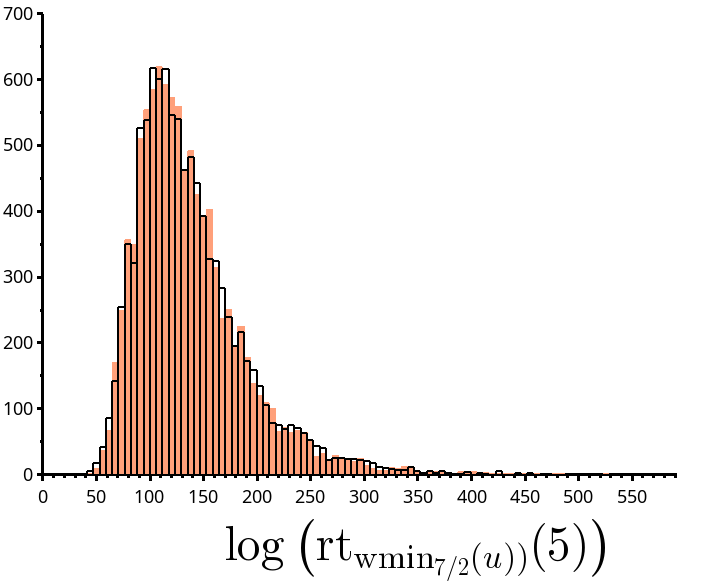}
\end{minipage}
\begin{minipage}[c]{.5\linewidth}
	\includegraphics[scale=0.41]{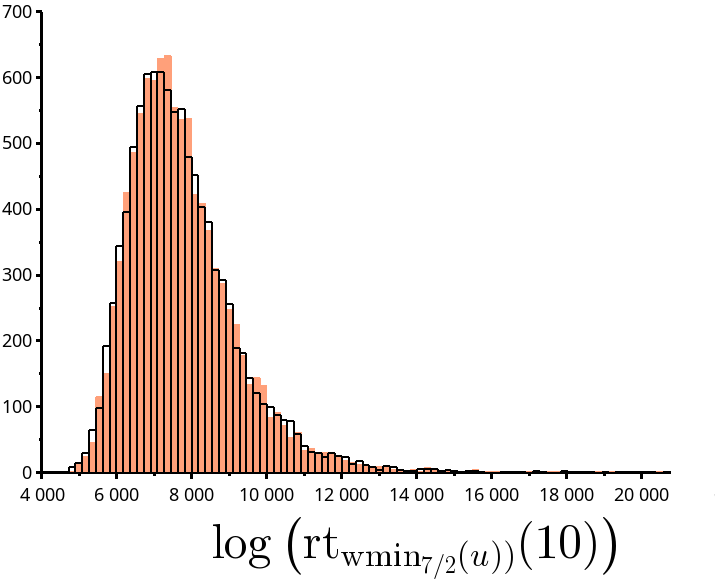}
\end{minipage}

\begin{center}
	\noindent\textbf{Figure~3:} Distribution of  $\rt_w(5)$ (left) and $\rt_w(10)$ (right) for 10,000 randomly chosen minimal words in base $7/2$ (represented by a black curve) and 10,000 random binary words (represented by a red or gray area).
\end{center}

%%%%%%%%%%%%%%%%%%%%%%%%%%%%%%%%%%
% DEVIATION
%
%%%%%%%%%%%%%%%%%%%%%%%%%%%%%%%%

\subsection{Results for the deviation from uniformity}

We begin by displaying the length-$l$ deviation of a single minimal word to visualize the shape of the function.

\bigskip
In Figure~4, both the horizontal and vertical scales are logarithmic. The black curve (the more ``wavy'' one) represents the length-$7$ deviation of the minimal word $\wmin_{7/2}(\mathtt{2})$, that is, the function
\[n \mapsto D_{\wmin_{7/2}(\mathtt{2}),7}(n),\]
up to $n = 10^6$, with a step of $1000$ (which helps accelerate the computation and smooth the figure). It is compared with the statistical properties of random binary words: the five red (or gray) curves (which appear ``parallel'') represent, from bottom to top, the minimum, the first decile, the mean, the ninth decile, and the maximum length-$7$ deviations, still with a step of $1000$, computed from a set of 1,000 random binary words.

\begin{figure}[!h]
\begin{center}
\includegraphics[scale=0.25]{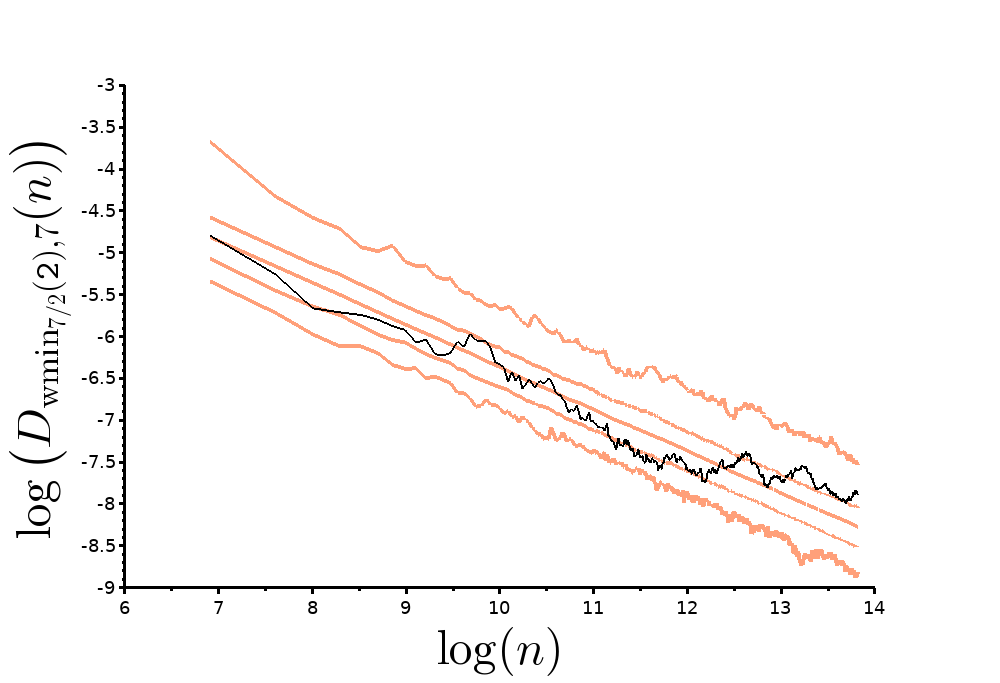}

\noindent\textbf{Figure~4:} Deviation from uniformity for subwords of length $l=7$,\\ in the minimal word $\wmin_{7/2}(\mathtt{2})$.
\end{center}
\end{figure}

Note that when studying the deviation from uniformity, the parameter $l$ (here $l = 7$) must be carefully chosen, depending on $q$ (the alphabet size) and the maximum  prefix length we consider. On the one hand, it is preferable to choose $l$ as large as possible: if the distribution of subwords of length $l$ is close to uniformity, then smaller subwords should also exhibit similar behavior. 
On the other hand, if $l$ is too large compared to the maximum prefix length (which is limited by the computational capacity of the machine), the quantity $D_{w,l}(n)$ loses its ability to accurately capture---and thus compare---the empirical frequency.

\paragraph{Analysis of Figure~4.} As can be observed:\\
1) The black curve decreases, indicating that the subwords of length $7$ become more and more fairly distributed in $\wmin_{7/2}(\mathtt{2})$ as the considered prefix grows. \\
2) The deviation $n \mapsto D_{\wmin_{7/2}(\mathtt{2}),7}(n)$ appears to decrease at a rate comparable to that of random binary sequences, which is asymptotically given by \cite[Theorem~1]{Phi75}:
\begin{equation}\label{eq:iteratedlog}
    \mathcal{O}\Big(\frac{\sqrt{\log\log(n)}}{\sqrt{n}}\Big).
\end{equation}

These two observations are confirmed by Figures~5 and 6. In Figure~5, we display, as a point cloud, the same function (that is, the deviation from uniformity for subwords of length $7$), still up to $n = 1{,}000{,}000$ and with a step of $1000$, for 20 randomly chosen minimal words in base $7/2$ from our second family. We continue to compare them with the statistical properties of random binary words.

\begin{center}
\includegraphics[scale=0.25]{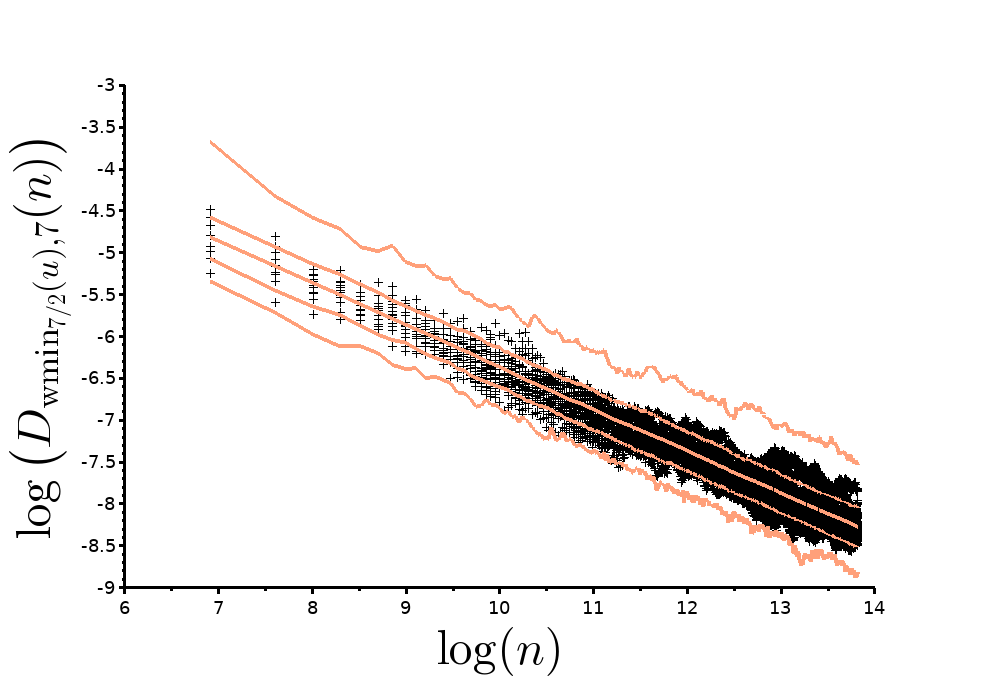}

\noindent\textbf{Figure~5:} Deviation from uniformity for subwords of length $7$ in minimal words $\wmin_{7/2}(u)$ obtained from $20$ randomly chosen seed words $u$.
\end{center}

In Figure~6, we display the deviations from uniformity for our third family (consisting of 10,000 words analyzed up to length $10^5$) for the four rational bases $7/2$, $5/3$, $11/3$, and $6/5$. For readability, we plot only their statistical properties (in black)---namely, the minimum, first decile, mean, ninth decile, and maximum---and compare them with those (in red, or gray) of 10,000 random words of length $10^5$.

\bigskip

\noindent
\begin{minipage}[c]{.5\linewidth}
    \includegraphics[scale=0.24]{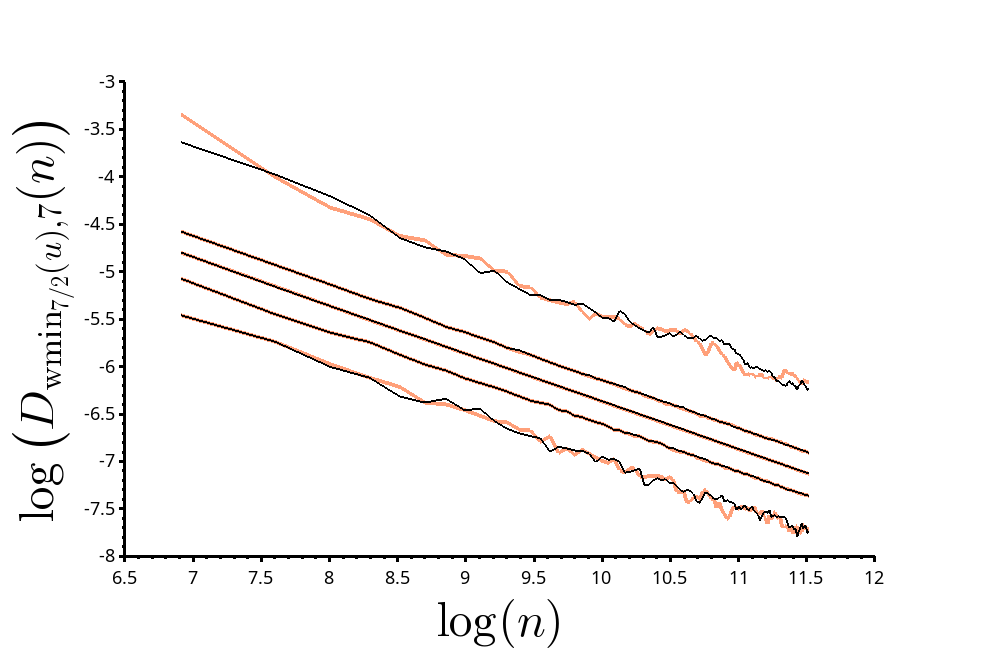}
\end{minipage}
\hfill
\begin{minipage}[c]{.5\linewidth}
    \includegraphics[scale=0.24]{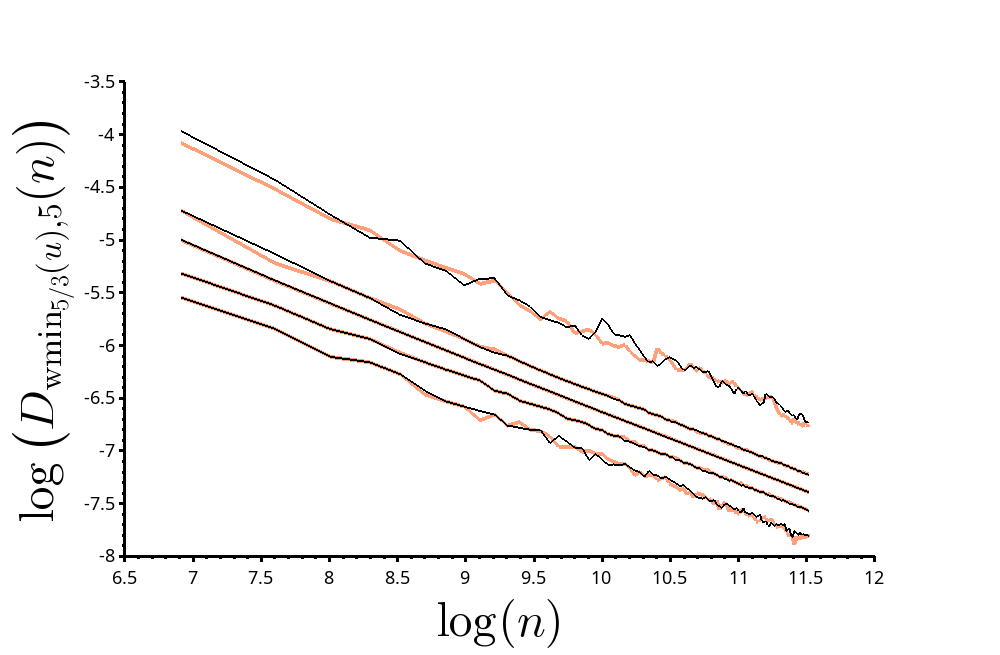}
\end{minipage}

\noindent
\begin{minipage}[c]{.5\linewidth}
    \includegraphics[scale=0.24]{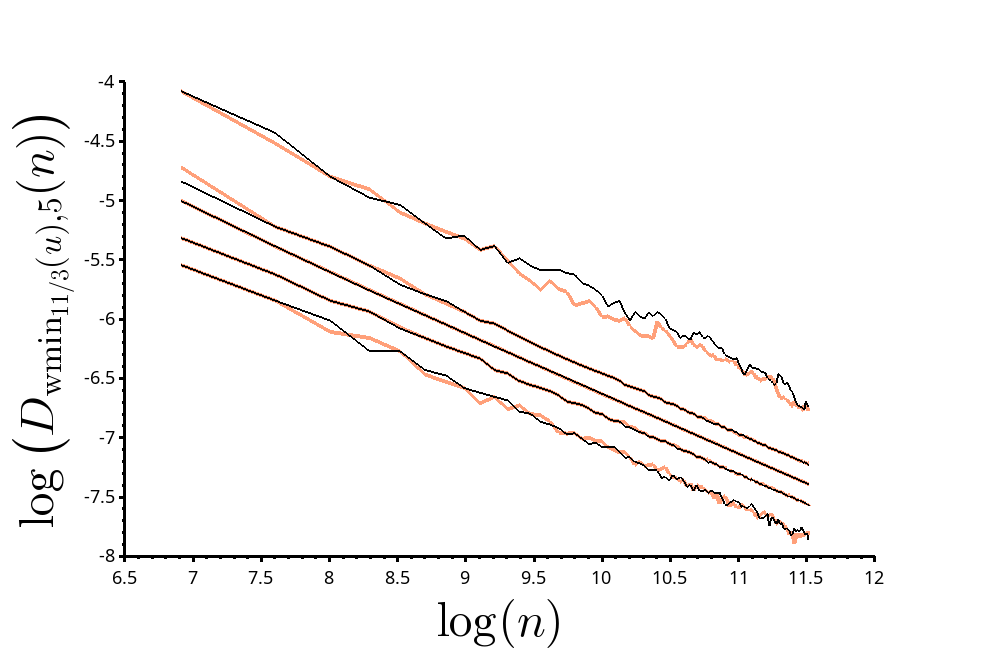}
\end{minipage}
\hfill
\begin{minipage}[c]{.5\linewidth}
    \includegraphics[scale=0.24]{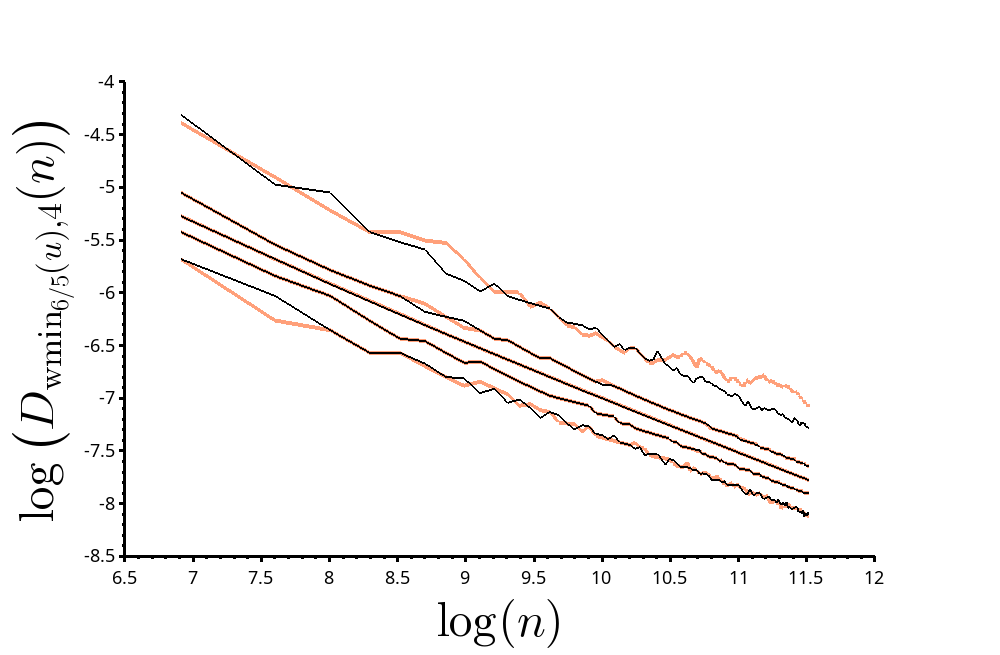}
\end{minipage}

\begin{center}
\noindent \textbf{Figure~6:} Comparison between the statistical properties of 10,000 minimal words in bases $p/q=7/2$ (top left), $5/3$ (top right), $11/3$ (bottom left) and $6/5$ (bottom right), and those of random $q$-ary words.
\end{center}

\medskip

As can be observed in the four panels of Figure~6, the distributions of the deviations from uniformity for minimal words and for random words  appear similar, as far as our division into quartiles allows us to assess. To confirm this observation, we performed three ``cut views'' ($n=3000$, $n=15000$ and $n=100000$) for each base.

\noindent \begin{minipage}[c]{.5\linewidth}
	\includegraphics[scale=0.235]{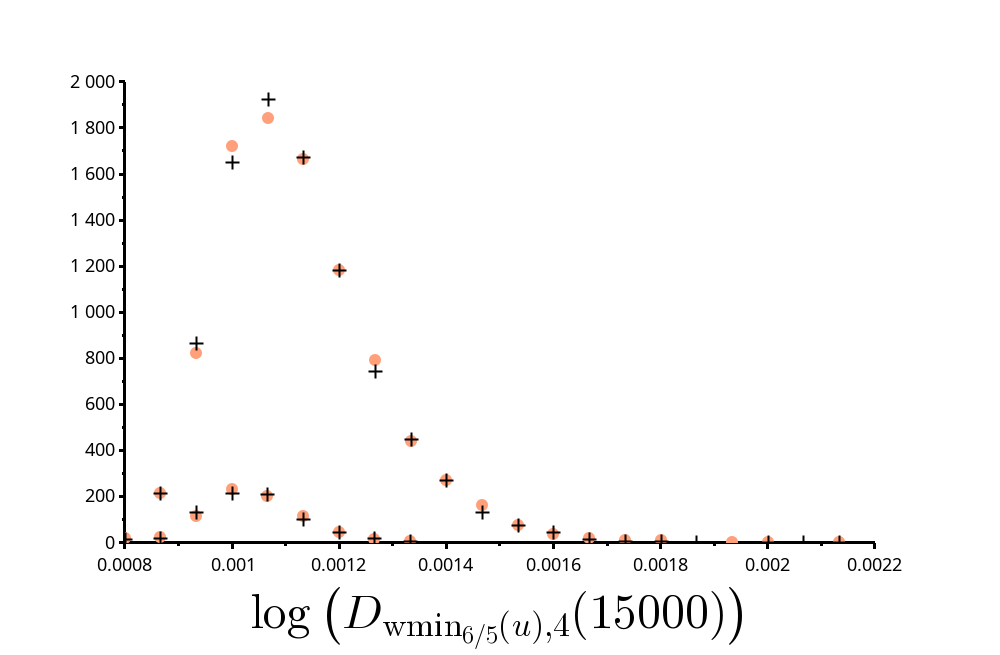}
\end{minipage}
\begin{minipage}[c]{.5\linewidth}
	\includegraphics[scale=0.235]{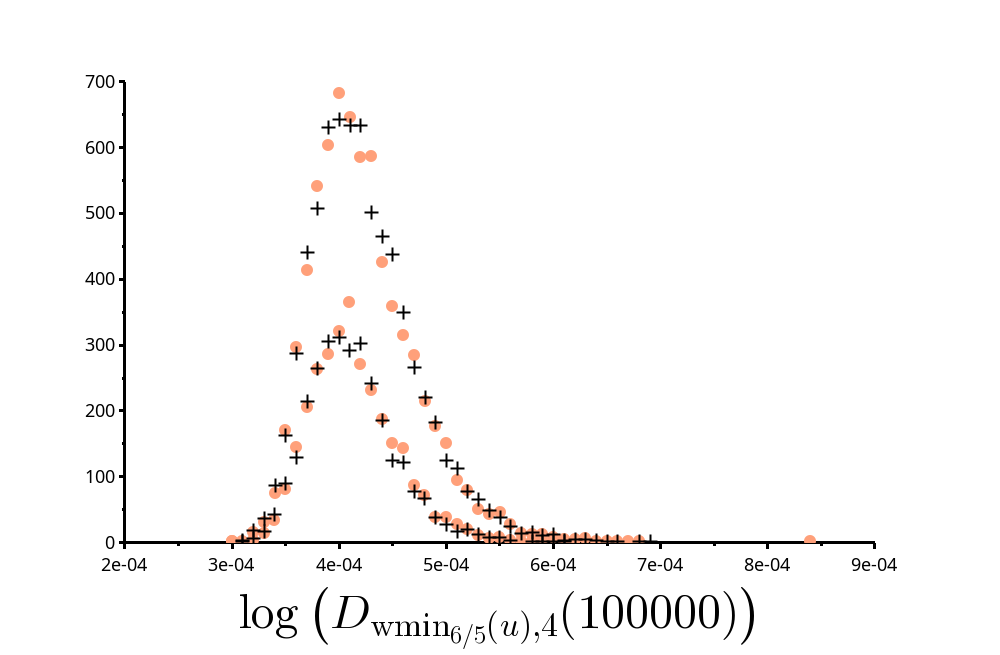}
\end{minipage}

\begin{center}
	\noindent\textbf{Figure~7:} Distribution of  $D_{w,4}(15000)$ (left) and $D_{w,4}(100000)$ (right) for 10,000 randomly chosen minimal words in base $6/5$ (represented by black crosses) and 10,000 random $5$-ary words (represented by red or gray dots).
\end{center}
Figure~7 compares the distributions of $D_{w,4}(15{,}000)$ and $D_{w,4}(100{,}000)$ for the 10,000 minimal words in base $6/5$ in our dataset with those for 10,000 random $5$-ary words. In both cases---as in all the other cut views we performed (see the public git repository \cite{Git26})---the distributions of the deviations for minimal and random words appear very similar and exhibit a superposition of two Poisson-like shapes. We verified that this superposition originates from the absolute value in the definition of the deviation from uniformity (see Expression~\eqref{eq:def_dfu}). More precisely, the distributions of the negative contributions to $D_{w,l}(15{,}000)$ and $D_{w,l}(100{,}000)$ form the smaller wave in both cases, while the positive contributions form the larger wave.

\subsection{Conclusion and perspectives}

We studied the richness threshold and deviation from uniformity in numerous minimal words to test whether our hypotheses of richness (which is a prerequisite for normality) and normality are credible. Beyond appearing merely normal, in all the cases we studied---and as far as our observations go---minimal words in base $p/q$ exhibit behavior indistinguishable from that of random $q$-ary words. From this perspective, they differ significantly from the two most-studied families of normal words: the $q$-ary Champernowne word and the infinite de Bruijn words. This difference is already clearly visible in Figure~8.
The $q$-ary Champernowne word (for $q\geq 2$) is defined as the concatenation of the base-$q$
 expansions of all positive integers, in increasing order \cite{Cha33}. An infinite $q$-ary de Bruijn word is an infinite word $w$ in which every $q$-ary finite word of length $l$ occurs exactly once in the prefix of length $q^l+l-1$; such words exist for every $q\geq 3$ (see \cite{BH11} for a complete proof) and are normal \cite{Uga00}.

\noindent
\begin{minipage}[c]{.5\linewidth}
    \includegraphics[scale=0.24]{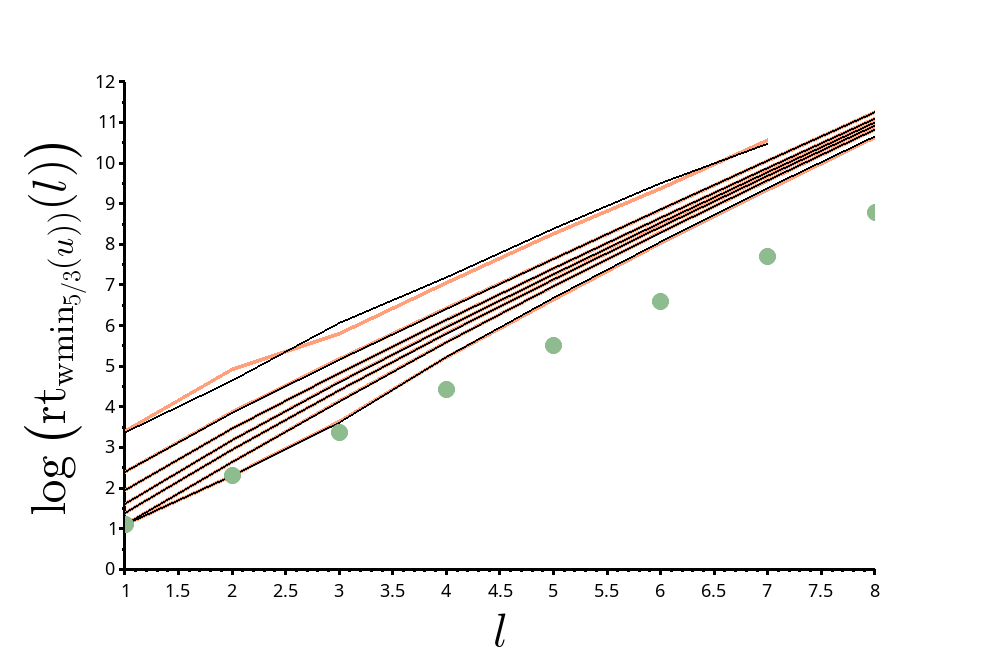}
\end{minipage}
\hfill
\begin{minipage}[c]{.5\linewidth}
    \includegraphics[scale=0.24]{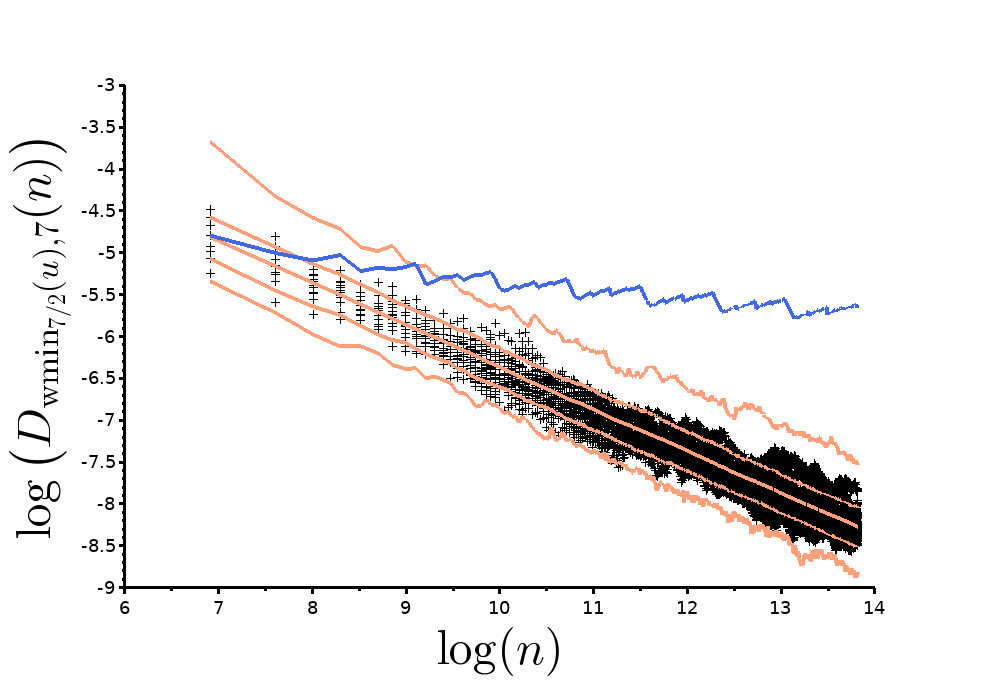}
\end{minipage}

\noindent \textbf{Figure~8:} Left: Top-right panel of Figure~2, in which we additionally plot, with green (or dark gray) large dots, the richness threshold of a ternary de Bruijn word. Right: Figure~5, where we additionally show, in blue (or dark grey), the discrepancy of the binary Champernowne word.

\bigskip
Note that the richness thresholds of the $q$-ary Champernowne word $w_{Cq}$ and any infinite $q$-ary de Bruijn word $w_{Bq}$ are easy to calculate:  
\[\rt_{w_{Cq}}(l)= lq^l - \frac{q^l-1}{q-1}+l+1, \quad \text{and} \quad rt_{w_{Bq}}(l)=q^l+l-1. \]
(For Champernowne, this follows from the fact that the subword $0\dots0$ is always the last $q$-ary word of length $l$ to appear, and it first appears in the base-$q$ expansion of $q^l$. For de Bruijn words, this result follows directly from their definition.)
The discrepancy of the Champernowne word is estimated in \cite{Sch86} (see also \cite{BG24}). Investigations related to the discrepancy of infinite de Bruijn words are presented in \cite{ABM24}.

\begin{question}
	Do minimal words in rational base $p/q$ appear to satisfy some supernormality properties, for instance, Poisson genericity \cite{ABM22}? Conversely, does there exist an interesting property that is satisfied by almost all $q$-ary words but not by minimal words?
\end{question}

It would also be interesting to understand the exact scope of our conjecture.

\begin{question}
	Does our Conjecture \ref{conj:normality} still hold for generalizations of rational base number systems \cite{AkiDraft,Ros25}?
\end{question}

%We conclude this article with a general question concerning the rational base number system and the well-studied $\beta$-expansion. REF
%Let us recall that rational base and $\beta$-expansion are two fundamentally distinct generalizations of the classical integer base. Notably, rational base expansions are written over the alphabet $\{0,\dots,p-1\}$, while $\beta$-expansions, for $\beta=p/q$, are written over $\{0,\dots,\lfloor p/q \rfloor\}$. However, their respective evaluation functions coincide (up to dividing by $q$, which is not of importance). Hence the question:
 
%\begin{question}
%Let $p>q>1$ be coprime integers. Does there exist a positive real number $x$ (called \emph{coincident number}) for which the $\beta$-expansion of $qx$ and the rational base expansion of $x$ coincide?
%\end{question}

%Flatto \cite{Fla92} proved that, in the case $p/q=3/2$, coincident numbers are exactly Mahler's $Z$-numbers. The next proposition states that this result remains valid for certain pairs of integers $(p,q)$. 

\paragraph{Acknowledgements} The first author would like to express her gratitude to Ver\'onica Becher  and Shigeki Akiyama  for  sharing insights about supernormality properties and multiple expansions in rational bases. The authors thank the anonymous referees for their careful reading and numerous constructive comments, which greatly improved  the paper.

\noindent This work was supported by the Center for Mathematical Modeling (CMM) BASAL fund FB210005 for center of excellence from ANID-Chile.

%%%% Bibiliography %%%%%%%%

{\small
\bibliography{biblio}
\bibliographystyle{alpha}}

\end{document}